\newtheorem{theorem}{Theorem}[section]
\newtheorem{proposition}[theorem]{Proposition}%
\newtheorem{corollary}[theorem]{Corollary}
\newtheorem{definition}[theorem]{Definition}
\newtheorem{lemma}[theorem]{Lemma}
\newcommand{\calB}{{\mathcal{B}}}
\newcommand{\calC}{{\mathcal{C}}}
\newcommand{\calE}{{\mathcal{E}}}
\newcommand{\calF}{{\mathcal{F}}}
\newcommand{\calN}{{\mathcal{N}}}
\newcommand{\calO}{{\mathcal{O}}}
\newcommand{\calP}{{\mathcal{P}}}
\newcommand{\calR}{{\mathcal{R}}}
\newcommand{\calS}{{\mathcal{S}}}
\newcommand{\calU}{{\mathcal{U}}}
\newcommand{\calZ}{{\mathcal{Z}}}
\newcommand{\N}{{\mathbb{N}}} 
\newcommand{\R}{{\mathbb{R}}}
\journal{Journal of Mathematical Analysis and Applications}
\begin{document}

\begin{frontmatter}



\title{Measurability of functions mapping a charge space into a uniform space}


\author{Jonathan M. Keith}
\ead{jonathan.keith@monash.edu}

\affiliation{organization={School of Mathematics, Monash University},
            addressline={Wellington Road}, 
            city={Clayton},
            postcode={3800}, 
            state={VIC},
            country={Australia}}

\begin{abstract}
The concept of measurability of functions on a charge space is generalised for functions taking values in a uniform space. Several existing forms of measurability generalise naturally in this context, and new forms of measurability are proposed. Conditions under which the various forms of measurability are logically equivalent are identified. Applying these concepts to real-valued functions, some recent characterisations of measurable functions on a bounded charge space are generalised to the unbounded case. 
\end{abstract}



\begin{keyword}
$T_1$-measurable \sep $T_2$-measurable \sep ray measurable \sep base measurable \sep uniformly base measurable \sep finitely additive measure



\end{keyword}

\end{frontmatter}


\section{Introduction}
\label{intro}

{\em Uniform spaces} generalise metric spaces, and were introduced by Andr\'e Weil in 1937~\cite{weil1937}. Introductions to the theory of uniform spaces, with varying levels of detail, can be found in~\cite{kelley1963, bourbaki1966, james1987, james1990, encyclopedia2003, willard2004}.

The goals of this paper are: 1) to propose forms of measurability pertaining to functions that map a charge space into a uniform space; 2) to explore logical relationships between these forms of measurability; and (3) to relate them to existing concepts, in particular for real-valued functions. 

This focus on uniform spaces is in part motivated by a paper of Avallone and Basile~\cite{avallone1991}, in which it is shown that key results in the approach to integration of Dunford and Schwartz~\cite{dunford1958} emerge from the interplay of two uniformities: one defined on the space of $T_1$-measurable functions in terms of hazy convergence, and the other defined on the smaller space of simple functions in terms of the pseudometric induced by the integral. This approach permits a broad interpretation of integration, and potentially one that encompasses functions for which the codomain is not a topological vector space. Although this present paper does not develop any novel concepts in integration, the forms of measurability proposed here are intended to prepare the way for future innovations in integration theory, and in particular may facilitate relating the uniform structure of $L_p$ spaces to the uniform structure of the codomain of integrable functions.

A brief review of existing concepts of measurability follows. In (countably additive) measure and integration theory, measurable functions are usually defined with a domain that is a measurable space and a codomain that is a topological space.

\begin{definition} \label{measurable_function_standard}
Let $(X, \calF)$ be a measurable space and $Y$ a topological space. A function $f: X \rightarrow Y$ is {\em measurable} if $f^{-1}(U) \in \calF$ for every open set $U$.
\end{definition}

In what follows, functions that are measurable in this sense will be called {\em conventionally measurable}. For such functions, it can be shown that $f^{-1}(B) \in \calF$ for any Borel set $B$, and this equivalent property can be taken as an alternative definition (as in Section~4.2 of~\cite{dudley2017}). A more general definition is sometimes used, with an arbitrary $\sigma$-field of subsets of $Y$ used in place of the Borel sets (as in Definition~2.1.3 of~\cite{athreya2006}). In that case, $Y$ need not be a topological space. However, various pathologies can arise in this setting: for example, even a continuous function may not have $f^{-1}(L) \in \calF$ for every Lebesgue measurable set $L \subseteq Y$ (Theorem~4.2.1 of~\cite{dudley2017}). 

On a charge space, however, measurable functions are defined differently. Concise reviews of the theory of charges can be found in~\cite{keith2022} and~\cite{basile2000}. The standard reference on charge theory is~\cite{bhaskararao1983}. In the following definition, and throughout this paper, $\mu$ denotes a {\em positive} charge, that is, $\mu(A) \geq 0$ for all $A \in \calF$. However, $\mu$ may be {\em bounded} ($\mu(X) < \infty$) or {\em unbounded} ($\mu(X) = \infty$).

\begin{definition} \label{T1_and_T2_measurable}
A function $f : X \rightarrow \mathbb{R}$ on a charge space $(X, \calF, \mu)$ is:
\begin{enumerate}
\item {\em $T_1$-measurable} if there is a sequence of simple functions $\{ f_i \}_{i=1}^{\infty}$ converging hazily to $f$; 
\item {\em $T_2$-measurable} if for every $\epsilon > 0$ there is a partition of $X$ into sets $A_0, A_1, \ldots, A_n \in \calF$ such that $\mu(A_0) < \epsilon$ and $\lvert f(x) - f(x') \rvert < \epsilon$ for all $x, x' \in A_i$ and $i \in \{ 1, \ldots, n \}$.
\end{enumerate}
\end{definition} 

It can be shown that $f$ is $T_1$-measurable if and only if it is $T_2$-measurable (Theorem~4.4.7 of~\cite{bhaskararao1983}). Moreover, $T_2$-measurable functions are necessarily {\em smooth}.

\begin{definition} \label{smooth_standard}
A function $f : X \rightarrow \mathbb{R}$ is {\em smooth} if for every $\epsilon > 0$ there is $k \in (0, \infty)$ such that $\mu^*(\{ x \in X : \lvert f(x) \rvert > k \}) < \epsilon$. 
\end{definition}

Here $\mu^*$ is the {\em outer charge} defined by
\[
\mu^*(A) = \inf \{ \mu(B) : B \in \calF, B \supseteq A \}
\]
for $A \in \calP(X)$, where $\calP(X)$ denotes the power set of $X$. A related function, which will be important later in this paper, is the {\em inner charge}:
\[
\mu_*(A) = \sup \{ \mu(B) : B \in \calF, B \subseteq A \}
\]
for $A \in \calP(X)$.

When $(X, \calF, \mu)$ is a complete measure space, $\mu$ is bounded, and $f$ is a real-valued function, conventional measurability also implies smoothness (see Proposition~4.2.17 of~\cite{bhaskararao1983}).

Hazy convergence, $T_1$-measurability, and $T_2$-measurability can be generalised in a straightforward way when the codomain of $f$ is a Banach space, by replacing the absolute value with the norm. (This is essentially the approach taken in Chapter III of Dunford and Schwartz~\cite{dunford1958}, where the term {\em totally measurable} is used instead of $T_1$-measurable.) In this context, $T_1$-measurability is equivalent to conventional measurability under certain conditions (see Lemma~III.6.9 and Theorem~III.6.10 of~\cite{dunford1958}). However, the equivalence may fail if, for example, $(X, \calF, \mu)$ is not a complete measure space. 


Yet another form of measurability, formulated in terms of Choquet integrals, was proposed by Greco~\cite{greco1981}. Here the following characterisation (based on Theorem~1 of~\cite{greco1981}) may be taken as an alternative definition that does not rely on any integral.

\begin{definition} \label{greco_measurable}
Consider a collection of sets $\calE \subset \calP(X)$ that includes the empty set $\emptyset$. A function $f: X \rightarrow [0, \infty)$ is {\em $\calE$-measurable} if for any $a, b \in (0, \infty)$ with $a > b$, there is $H \in \calE$ such that
\[
f^{-1}(a, \infty) \subseteq H \subseteq f^{-1}(b, \infty).
\]
A function $f: X \rightarrow \R$ is {\em $\calE$-measurable} if $f^+ := \max \{ f, 0 \}$ and $f^- := \max \{ -f, 0 \}$ are $\calE$-measurable.
\end{definition}

In fact, Greco allowed for $f$ to be an extended real-valued function. This form of measurability is very general with regard to the domain of $f$. It would thus be interesting to attempt a definition that encompasses both $\calE$-measurability and the forms of measurability proposed in Section~\ref{uniform_codomain_section} below, but that is beyond the scope of this paper. However, Theorem~\ref{real_valued_measurability} below identifies relationships between $\calE$-measurability and other forms of measurability in the case of real-valued functions.

Greco showed that if $\calE = \calF$ is a field, then a function that is $\calF$-measurable and smooth is also $T_2$-measurable (Greco did not explicitly refer to $T_2$-measurability or smoothness, but the claim follows straightforwardly from Theorem~3 of~\cite{greco1981}). The converse does not hold in general, but one contribution of this present paper is to identify a sufficient additional condition under which the converse does hold (see Theorem~\ref{real_valued_measurability} below).


The paper is structured as follows. Section~\ref{uniform_codomain_section} considers natural generalisations of $T_1$- and $T_2$-measurability for functions taking values in a uniform space, and also proposes two new forms of measurability for such functions, called {\em base measurability} and {\em uniform base measurability}. This section identifies logical relationships between existing and proposed notions of measurability. Section~\ref{real_valued_functions} considers how these and other forms of measurability relate to each other in the case of real-valued functions. 
The main theorem of this section (Theorem~\ref{real_valued_measurability}) is a substantial improvement over Theorem~3.4 of~\cite{keith2022}, and in particular extends that theorem to the case of an unbounded charge. A short final section considers ``inheritance'' of measurability with respect to a coarser or finer uniformity, with application to uniformities induced by pseudometrics, weak uniformities, and product uniformities.

\section{Measurable functions with values in a uniform space} \label{uniform_codomain_section}

There are several natural ways to define measurability for functions with a codomain that is a uniform space. In the following definition, and throughout this document, $\overline{\calF}$ denotes the {\em Peano-Jordan completion} of a field $\calF$, defined as in~\cite{basile2000} and elsewhere. 

\begin{definition} \label{measurable_uniform}
Consider a charge space $(X, \calF, \mu)$ and a uniform space $Y$ with uniformity $\calU$ and uniform topology $\tau$. A function $f : X \rightarrow Y$ is:
\begin{enumerate}
\item {\em $T_1$-measurable} if for every $\epsilon > 0$ and every entourage $E$ there is a simple function $s$ such that
\[
\mu^*(\{ x \in X : (s(x), f(x)) \notin E \}) < \epsilon;
\]

\item {\em $T_2$-measurable} if for every $\epsilon > 0$ and every entourage $E$ there is a partition of $X$ into sets $A_0, A_1, \ldots, A_n \in \calF$ such that $\mu(A_0) < \epsilon$ and $f(A_i) \times f(A_i) \subseteq E$ for each $i \in \{ 1, \ldots, n \}$; 

\item {\em smooth} if for every $\epsilon > 0$ and every entourage $E$ there exists a finite collection of sets $B_1, \ldots, B_n \in \calP(Y)$ such that $B_i \times B_i \subseteq E$ for each $i \in \{ 1, \ldots, n \}$, and
\[
\mu^* (\{ x \in X : f(x) \notin \cup_{i=1}^n B_i \}) < \epsilon;
\]

\item {\em base measurable} with respect to $\tau$ if each $y \in Y$ has a neighbourhood base $\calB_y$ such that $f^{-1}(B) \in \overline{\calF}$ for all $B \in \calB_y$;

\item {\em uniformly base measurable} with respect to $\calU$ if there is an entourage base $\calB \subseteq \calU$ such that $f^{-1}(E[y]) \in \overline{\calF}$ for all $E \in \calB$ and $y \in Y$.
\end{enumerate}. 
\end{definition}

The definition of smoothness calls to mind total boundedness. However, smoothness does not in general entail that for any $\epsilon > 0$ there is a totally bounded set $K \subseteq Y$ with $\mu^* (f^{-1}(K^c)) < \epsilon$. The latter condition straightforwardly implies smoothness. A sufficient additional condition for the converse is that there exists an entourage $E^*$ such that $E^*[y]$ is totally bounded for all $y \in Y$. (To see this, apply the definition of smoothness to a function $f$, with $E = E^*$ and a given $\epsilon > 0$, thus obtaining $B_1, \ldots, B_n \subseteq Y$. Note $B_i$ is totally bounded for $i \in \{ 1, \ldots, n \}$, since $B_i \times B_i \subseteq E^*$. Define $K := \cup_{i=1}^n B_i$. Then $K$ is totally bounded, and $\mu^* (f^{-1}(K^c)) < \epsilon$.) 

The definition of base measurability makes no mention of the uniform structure of $Y$, and can therefore be used even if $Y$ is merely a topological space with arbitrary topology $\tau$. 

Note that if $\rho$ is a subbase for a topology $\tau$ on $Y$ such that $f^{-1}(E) \in \overline{\calF}$ for all $E \in \rho$, then $f$ is base measurable with respect to $\tau$. Similarly, if $\calR$ is an entourage subbase for a uniformity $\calU$ such that $E[y] \in \overline{\calF}$ for all $E \in \calR$ and $y \in Y$, then $f$ is uniformly base measurable.

Logical relationships between these forms of measurability are identified in the two theorems contained in this section. The second theorem is separated from the first because some new definitions and notations are required.

\begin{theorem} \label{uniform_equivalences}
Consider a charge space $(X,\calF,\mu)$, a uniform space $(Y, \calU)$, and a function $f : X \rightarrow Y$.
\begin{enumerate}
\item $f$ is $T_1$-measurable iff $f$ is $T_2$-measurable. If either holds, then $f$ is smooth.

\item Suppose $\calF$ is a $\sigma$-field and $f$ is conventionally measurable. Then the following hold.
\begin{enumerate}
\item $f$ is uniformly base measurable.
\item If $\mu$ is a bounded measure and $Y$ is Lindel\"of, then $f$ is $T_2$-mea\-surable.
\end{enumerate}

\item Suppose $(X, \calF, \mu)$ is a complete measure space and $Y$ is hereditarily Lindel\"of. If either of the following statements hold,
\begin{enumerate}
\item $f$ is base measurable, or

\item  $f$ is $T_2$-measurable, 
\end{enumerate}
then $f$ is conventionally measurable.

\item If $f$ is uniformly base measurable then $f$ is base measurable. The converse holds if $Y$ is uniformly locally compact.

\item If $f$ is uniformly base measurable and smooth, then $f$ is $T_2$-measurable.
\end{enumerate}
\end{theorem}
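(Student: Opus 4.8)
The plan is to fix $\epsilon > 0$ and an entourage $E$ of $Y$ and construct the required partition directly. First I would replace $E$ by something more convenient: choose a symmetric entourage $V \in \calU$ with $V \circ V \subseteq E$ (a standard property of uniformities), and then, using that uniform base measurability supplies an entourage base $\calB \subseteq \calU$ with $f^{-1}(W[y]) \in \overline{\calF}$ for all $W \in \calB$ and $y \in Y$, pick $W \in \calB$ with $W \subseteq V$. Two properties of such a $W$ will be used throughout: $f^{-1}(W[y]) \in \overline{\calF}$ for every $y$, and $W[y] \times W[y] \subseteq E$ for every $y$ --- indeed, if $b, b' \in W[y]$ then $(y,b),(y,b') \in W \subseteq V$, so $(b,b') \in V \circ V \subseteq E$ by symmetry of $V$.

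Next I would bring in smoothness. Applying its definition to $f$ with entourage $W$ and tolerance $\epsilon/2$ yields sets $B_1, \dots, B_m \in \calP(Y)$ with $B_i \times B_i \subseteq W$ and $\mu^*(\{x \in X : f(x) \notin \bigcup_{i=1}^m B_i\}) < \epsilon/2$. After discarding any empty $B_i$ and choosing a point $y_i \in B_i$ in each of the rest, we have $B_i \subseteq W[y_i]$. Put $C_i := f^{-1}(W[y_i]) \in \overline{\calF}$. Since $f(C_i) \subseteq W[y_i]$, the first step gives $f(C_i) \times f(C_i) \subseteq E$; and since $f^{-1}(B_i) \subseteq C_i$, the set $X \setminus \bigcup_i C_i$ is contained in $\{x : f(x) \notin \bigcup_i B_i\}$, whence $\mu^*(X \setminus \bigcup_i C_i) < \epsilon/2$. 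As $\overline{\calF}$ is a field, I would then disjointify: set $A_i' := C_i \setminus \bigcup_{j<i} C_j \in \overline{\calF}$, so the $A_i'$ are pairwise disjoint, $\bigcup_i A_i' = \bigcup_i C_i$, and $f(A_i') \times f(A_i') \subseteq E$.

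The final step bridges from $\overline{\calF}$ to $\calF$, as the definition of $T_2$-measurability demands sets in $\calF$. By the defining property of the Peano-Jordan completion, each $A_i'$ contains a set $D_i \in \calF$ with $\mu^*(A_i' \setminus D_i) < \epsilon/(2m)$, where $m$ is the number of surviving indices. The $D_i$ are then pairwise disjoint members of $\calF$ with $f(D_i) \times f(D_i) \subseteq E$. Setting $A_0 := X \setminus \bigcup_i D_i \in \calF$ and using $\bigcup_i A_i' = \bigcup_i C_i$ together with $D_i \subseteq A_i'$, one gets $A_0 = (X \setminus \bigcup_i C_i) \cup \bigcup_i (A_i' \setminus D_i)$, so finite subadditivity of $\mu^*$ and the fact that $\mu^*$ agrees with $\mu$ on $\calF$ give $\mu(A_0) \leq \mu^*(X \setminus \bigcup_i C_i) + \sum_i \mu^*(A_i' \setminus D_i) < \epsilon/2 + m\cdot\epsilon/(2m) = \epsilon$. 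Thus $A_0, D_1, \dots, D_m$ is the desired partition (and if $m = 0$ one simply takes $A_0 = X$).

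I expect the main obstacle to be precisely this $\overline{\calF}$-versus-$\calF$ passage in the last step, coupled with splitting the error allowance cleanly between smoothness ($\epsilon/2$) and the Peano-Jordan approximations ($\epsilon/2$); the remaining ingredients --- the symmetric refinement of $E$, the inclusion $W[y] \times W[y] \subseteq E$, and the elementary properties of $\mu^*$ (monotonicity, finite subadditivity, agreement with $\mu$ on $\calF$) --- are routine, and in particular they let the argument go through verbatim when $\mu$ is unbounded, since $\mu$ is never evaluated on a set outside $\calF$.
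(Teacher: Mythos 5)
Your argument addresses only claim~5 of the theorem (uniformly base measurable and smooth implies $T_2$-measurable). Claims~1 through~4 --- the equivalence of $T_1$- and $T_2$-measurability and their implication of smoothness, the consequences of conventional measurability on a $\sigma$-field (including the Lindel\"of argument for 2b), the passage from base measurability or $T_2$-measurability back to conventional measurability on a complete, hereditarily Lindel\"of setting, and the relationship between uniform base measurability and base measurability (with the uniformly locally compact converse) --- are not touched at all. That is the substantive gap: the proposal proves one fifth of the statement.

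For claim~5 itself, your construction is correct and follows essentially the same route as the paper: refine $E$ to a symmetric $V$ with $V \circ V \subseteq E$, pick $W$ in the entourage base with $W \subseteq V$, apply smoothness with $W$ to get finitely many small sets, absorb each into a ball $W[y_i]$ whose preimage lies in $\overline{\calF}$, and disjointify. Where you go beyond the paper is the final passage from $\overline{\calF}$ to $\calF$: the paper's partition is built from preimages lying only in $\overline{\calF}$ and leaves the transfer to $\calF$ implicit (it rests on the fact that $T_2$-measurability with respect to $(X, \overline{\calF}, \overline{\mu})$ implies $T_2$-measurability with respect to $(X, \calF, \mu)$, cited elsewhere in the paper via Proposition~1.8(c) of Basile--Bhaskara Rao). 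Your explicit inner approximation of each $A_i'$ by $D_i \in \calF$ with $\mu^*(A_i' \setminus D_i) < \epsilon/(2m)$, and the resulting bound $\mu(A_0) < \epsilon$, makes that step self-contained and is a genuine (if minor) improvement in rigour. But you would still need to supply proofs of the remaining four claims for the theorem as stated.
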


In the following proof, and throughout this document, $\N$ represents the positive integers (excluding 0) and $I_A$ denotes the indicator function for a set $A$. The notation $\sum_{i=1}^n b_i I_{B_i}$, where $b_1, \ldots, b_n \in Y$ are distinct and $\{ B_1, \ldots, B_n \}$ are disjoint subsets of $X$, is used to denote a simple function, even if no addition operation is defined on $Y$.

\begin{proof}
The proof of~1 somewhat resembles the proof of Theorem~4.4.7 in~\cite{bhaskararao1983}. Suppose $f$ is $T_1$-measurable and choose $\epsilon > 0$ and an entourage $E$. There is a symmetric entourage $D$ such that $D \circ D \subseteq E$, and a simple function $s := \sum_{i=1}^n b_i I_{B_i}$, where $b_1, \ldots, b_n \in Y$ are distinct and $\{ B_1, \ldots, B_n \}$ is a partition of $X$ into elements of $\calF$, such that $\mu^*(B_0) < \epsilon$, where
\[
B_0 := \{ x \in X : (s(x), f(x)) \notin D \}.
\]
Moreover, there is some $A_0 \in \calF$ with $\mu(A_0) < \epsilon$ such that $B_0 \subseteq A_0$. For each $i \in \{ 1, \ldots, n \}$, define $A_i := B_i \setminus A_0 \in \calF$. Then for any $i \in \{ 1, \ldots, n \}$ and $x, x' \in A_i$, one must have $(f(x), b_i) = (f(x), s(x)) \in D$ and similarly $(f(x'), b_i) = (f(x'), s(x')) \in D$. Hence $(f(x), f(x')) \in D \circ D \subseteq E$, implying $f$ is $T_2$-measurable.

Conversely, suppose $f$ is $T_2$-measurable and choose $\epsilon > 0$ and an entourage $E$. Then there is a partition of $X$ into sets $A_0, \ldots, A_n \in \calF$ such that $\mu(A_0) < \epsilon$ and $f(A_i) \times f(A_i) \subseteq E$ for each $i \in \{ 1, \ldots, n \}$. For each $i \in \{ 1, \ldots, n \}$, choose any $a_i \in f(A_i)$, and define $s := \sum_{i=1}^n a_i I_{A_i}$. Then $(s(x), f(x)) = (a_i, f(x)) \in f(A_i) \times f(A_i) \subseteq E$ for $x \in A_i$. It follows that
\[
\mu^*(\{ x \in X : (s(x), f(x)) \notin E \}) \leq \mu(A_0) < \epsilon,
\]
hence $f$ is $T_1$-measurable. 

Regarding smoothness, consider $\epsilon > 0$ and an entourage $E$, and let $A_0, \ldots, A_n \in \calF$ be as in the definition of $T_2$-measurability. For each $i \in \{ 1, \ldots, n \}$, define $B_i := f(A_i)$, so that $B_i \times B_i \subseteq E$ and 
\[
\mu^* (\{ x \in X : f(x) \notin \cup_{i=1}^n B_i \}) \leq \mu(A_0) < \epsilon,
\]
which implies $f$ is smooth.

For~2, suppose $\calF$ is a $\sigma$-field and $f$ is conventionally measurable. Since the collection $\calO \subseteq \calU$ of open, symmetric entourages forms an entourage base (see Theorem~6, Chapter~6 of~\cite{kelley1963}), it is immediate that $f$ is uniformly base measurable. For 2b, consider $E \in \calU$. There exists $D \in \calO$ with $D \subseteq E$. Since $Y$ is Lindel\"of and $\{ D[y] : y \in Y \}$ is an open cover of $Y$, there is a sequence $\{ y_i \}_{i=1}^{\infty} \subseteq Y$ such that $Y = \cup_{i=1}^{\infty} D[y_i]$. Define $B_1 := D[y_1]$ and iteratively define $B_i := D[y_i] \setminus \cup_{j=1}^{i-1} B_j$ for $i \geq 2$. Then $A_i := f^{-1}(B_i) \in \calF$ and $f(A_i) \times f(A_i) \subseteq E$ for all $i \in \N$. Moreover, if $\mu$ is a bounded measure then for any $\epsilon > 0$ there is $n \in \N$ such that $\mu((\cup_{i = 1}^{n} A_i)^c) < \epsilon$. Define $A_0 := (\cup_{i = 1}^{n} A_i)^c \in \calF$, then $A_0, A_1, \ldots, A_n$ is a partition of the form required by the definition of $T_2$-measurability. 

For~3, suppose $(X, \calF, \mu)$ is a complete measure space and $Y$ is hereditarily Lindel\"of. Further suppose $f$ is base measurable, and consider an open set $G \subseteq Y$. For each $y \in G$, let $\calN_y$ be the (non-empty) set of all neighbourhoods $N$ of $y$ with $N \subseteq G$ and $f^{-1}(N) \in \overline{\calF}$. Let $\calN := \cup_{y \in G} \calN_y$. The interiors $\{ N^{\circ} : N \in \calN \}$ form an open cover of $G$, so there is a countable set $\{ N_i \}_{i = 1}^{\infty} \subseteq \calN$ such that $G = \cup_{i = 1}^{\infty} N_i$, hence $f^{-1}(G) = \cup_{i = 1}^{\infty} f^{-1}(N_i) \in \overline{\calF}$. But $\overline{\calF} = \calF$ by completeness of $(X, \calF, \mu)$, and thus $f$ is conventionally measurable. 

For~3b, consider an open set $G \subseteq Y$. For each $y \in G$, let $\calN_y$ be the set comprised of all pairs $(U, V)$, where $U$ is an open neighbourhood of $y$ contained in $G$ and $V \in \calF$ with $f^{-1}(U) \subseteq V \subseteq f^{-1}(G)$. If $f$ is $T_2$-measurable, then $\calN_y$ is non-empty, as evidenced by the following construction. There exists $D \in \calO$ with $(D \circ D)[y] \subseteq G$. Then $U := D[y]$ is an open neighbourhood of $y$ contained in $G$. Using the $T_2$-measurability of $f$, one may construct a pairwise disjoint sequence $\{ A_i \}_{i=1}^{\infty} \subseteq \calF$ such that $f(A_i) \times f(A_i) \subseteq D$ for all $i \in \N$ and $\mu^*(A_0) = 0$, where $A_0 := (\cup_{i=1}^{\infty} A_i)^c$. Since $(X, \calF, \mu)$ is complete, subsets of $A_0$ are elements of $\calF$. Now define 
\[
V := \bigcup \{ A_i : A_i \cap f^{-1}(U) \neq \emptyset \} \cup (A_0 \cap f^{-1}(U)).
\]
Then $V \in \calF$ with $f^{-1}(U) \subseteq V$. Moreover, $f(V) \subseteq (D \circ D)[y] \subseteq G$, hence $V \subseteq f^{-1}(G)$, as required. Now let $\calN := \cup_{y \in Y} \calN_y$. Since $Y$ is hereditarily Lindel\"of, there is a sequence $\{ (U_i, V_i) \}_{i = 1}^{\infty} \subseteq \calN$ such that $G = \cup_{i=1}^{\infty} U_i$ and thus $f^{-1}(G) = \cup_{i=1}^{\infty} f^{-1}(U_i) \subseteq \cup_{i=1}^{\infty} V_i \subseteq f^{-1}(G)$. It follows that $f^{-1}(G) = \cup_{i=1}^{\infty} V_i \in \calF$, that is, $f$ is conventionally measurable. 

To show~4, note that if $f$ is uniformly base measurable with entourage base $\calB$ as in Definition~\ref{measurable_uniform}(5), then $\calB_y := \{ E[y] : E \in \calB \}$ is a neighbourhood base at $y$ for any $y \in Y$, and $f^{-1}(B) \in \overline{\calF}$ for all $B \in \calB_y$. Hence $f$ is base measurable. For the partial converse, consider $E \in \calU$. If $Y$ is uniformly locally compact, there exists $D \in \calU$ such that $D \circ D \subseteq E$ and $D[y]$ is compact for all $y \in Y$. If $f$ is base measurable, for each $z \in D[y]$ the set $\calN_z$ comprised of all neighbourhoods $N$ of $z$ with $N \subseteq E[y]$ and $f^{-1}(N) \in \overline{\calF}$ is non-empty. Define $\calN := \cup_{z \in D[y]} \calN_z$. The interiors $\{ N^{\circ} : N \in \calN \}$ form an open cover of $D[y]$, hence there is a finite set $\{ N_i \}_{i = 1}^n \subseteq \calN$ that covers $D[y]$. Define $V_y := \cup_{i = 1}^n N_i$, then $D[y] \subseteq V_y \subseteq E[y]$ and $f^{-1}(V_y) \in \overline{\calF}$. But then $V := \cup_{y \in Y} \{ y \} \times V_y$ is an entourage with $D \subseteq V \subseteq E$ and $f^{-1}(V[y]) \in \overline{\calF}$ for all $y \in Y$, implying $f$ is uniformly base measurable.

To show~5, choose any $\epsilon > 0$ and any entourage $E$. Then there is some symmetric entourage $D$ with $D \circ D \subseteq E$. If $f$ is uniformly base measurable, there is some entourage $C \subseteq D$ such that $f^{-1}(C[y]) \in \overline{\calF}$ for all $y \in Y$. If $f$ is smooth, there are sets $G_1, \ldots, G_n \in \calP(Y)$ such that $G_i \times G_i \subseteq C$ for $i \in \{ 1, \ldots, n \}$ and $\mu^*(f^{-1}((\cup_{i=1}^n G_i)^c)) < \epsilon$. For each $i \in \{ 1, \ldots, n \}$, choose $y_i \in G_i$ and note $G_i \subseteq C[y_i]$ and $C[y_i] \times C[y_i] \subseteq D[y_i] \times D[y_i] \subseteq D \circ D \subseteq E$. Form a partition of $Y$ as follows: define $B_1 := C[y_1]$ and $B_i := C[y_i] \setminus \cup_{j=1}^{i-1} B_j$ for $i \in \{ 2, \ldots, n \}$. Then define $B_0 := (\cup_{i=1}^n B_i)^c \subseteq (\cup_{i=1}^n G_i)^c$. Set $A_i := f^{-1}(B_i)$ for $i \in \{ 0, \ldots, n \}$; then $A_0, A_1, \ldots, A_n$ is a partition of the form required by the definition of $T_2$-measurability. 
\end{proof}

The construction of the pairwise disjoint sequence $\{ A_i \}_{i=1}^{\infty}$ in the proof of~3b implicitly invokes the axiom of countable choice: one must choose a finite partition of $X$ (using $T_2$-measurability of $f$) for each element of a sequence $\epsilon_n \rightarrow 0$. The proof of~4 invokes the axiom of choice via selection of a finite cover for each $y \in Y$. Combining~4 and~5 gives that if $Y$ is uniformly locally compact and $f$ is base measurable and smooth, then $f$ is $T_2$-measurable. The latter result can alternatively be proved without invoking any choice axiom (proof omitted).

The second theorem in this section identifies a partial converse to Theorem~\ref{uniform_equivalences}(5), when $\calU$ is the weak (ie. initial) uniformity generated by a family of functions $\calS$. The conditions of the theorem require the following definitions and notations. 

\begin{definition} \label{boundary_mass_limit_defn}
Consider a charge space $(X, \calF, \mu)$ and a function $f : X \rightarrow \mathbb{R}$. For each $z \in \mathbb{R}$, define the {\em boundary mass limit} $\phi_f : \mathbb{R} \rightarrow [0, \infty]$ to be the function:
\[
\phi_f(z) := \lim_{\delta \rightarrow 0} \mu_*(f^{-1}(z - \delta, z + \delta)).
\]
\end{definition}

Let $\Xi$ denote the usual uniformity on $\mathbb{R}$. Given a uniform space $(Y, \calU)$, a function $g : Y \rightarrow \mathbb{R}$, and a set $V \subseteq \mathbb{R} \times \mathbb{R}$, define  
\[
g^{-1}(V) := \{ (y, y') \in Y \times Y : (g(y), g(y')) \in V \}
\]
and note $g^{-1}(V) \in \calU$ whenever $g$ is uniformly continuous and $V \in \Xi$. 

\begin{definition} \label{eta_and_zeta}
Consider a charge space $(X, \calF, \mu)$, a uniform space $(Y, \calU)$, and a function $f : X \rightarrow Y$. Let $\calS$ be a family of uniformly continuous real-valued functions on $Y$. For each $g \in \calS$, define
\begin{eqnarray*}
\epsilon_g &:=& \{ (-\infty, z), (z, \infty) : z \in \mathbb{R}, \phi_{g \circ f}(z) < \infty \}, \\
\eta_g &:=& \{ g^{-1}(E) : E \in \epsilon_g \}, \\
\calE_g &:=& \{ g^{-1}(E) : E \in \Xi, E[z] \in \epsilon_g \mbox{ for all } z \in \R \}, \\
\kappa_g &:=& \{ (-\infty, z), (z, \infty) : z \in \mathbb{R}, \phi_{g \circ f}(z) = 0 \}, \\
\zeta_g &:=& \{ g^{-1}(E) : E \in \kappa_g \}, \\
\calZ_g &:=& \{ g^{-1}(E) : E \in \Xi, E[z] \in \kappa_g \mbox{ for all } z \in \R \}.
\end{eqnarray*}
Also define $\eta := \cup_{g \in \calS} \eta_g$, $\calE := \cup_{g \in \calS} \calE_g$, $\zeta := \cup_{g \in \calS} \zeta_g$ and $\calZ := \cup_{g \in \calS} \calZ_g$. 
\end{definition}

Thus $\eta$ is the collection of all inverse images of open rays with finite boundary mass limit at the endpoint, and $\calE$ is a collection of entourages formed from elements of $\eta$. Similarly, $\zeta$ is the collection of all inverse images of open rays with zero boundary mass limit at the endpoint, and $\calZ$ is a collection of entourages formed from elements of $\zeta$.

The uniform continuity of the functions in $\calS$ ensures $\calE \subseteq \calU$ and $\calZ \subseteq \calU$. It also ensures the elements of $\eta$ and $\zeta$ are open sets (although continuous functions would be sufficient for this).

\begin{theorem} \label{T2_relationship_generalised}
Consider a charge space $(X,\calF,\mu)$, a uniform space $(Y, \calU)$, and a $T_2$-measurable function $f : X \rightarrow Y$. Let $\calS$, $\eta$ and $\calE$ be as in Definition~\ref{eta_and_zeta}.
\begin{enumerate}
\item If $\eta$ is a subbase for the uniform topology, then $f$ is base measurable.
\item If $\calE$ is an entourage subbase for $\calU$, then $f$ is uniformly base measurable.
\end{enumerate}
\end{theorem}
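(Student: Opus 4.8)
The plan is to prove both parts by showing that, for a $T_2$-measurable $f$, the inverse image under $f$ of each ray in $\eta$ (and each entourage-slice in $\calE$) lands in $\overline{\calF}$, and then invoke the two ``subbase'' remarks made just after Definition~\ref{measurable_uniform}. So the core task reduces to: if $g \in \calS$ and $z \in \R$ with $\phi_{g\circ f}(z) < \infty$, then $f^{-1}(g^{-1}(z,\infty)) = (g\circ f)^{-1}(z,\infty) \in \overline{\calF}$, and likewise for $(-\infty, z)$. Since $g$ is uniformly continuous on $Y$, the composition $h := g \circ f : X \to \R$ is itself $T_2$-measurable (compose the partition from the definition applied to $f$ with the entourage $g^{-1}(V)$ for a given $V \in \Xi$; uniform continuity guarantees $g^{-1}(V) \in \calU$, and $g$ maps each $f(A_i)$ into a $V$-small set). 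Thus everything comes down to a single real-variable statement: \emph{if $h : X \to \R$ is $T_2$-measurable and $\phi_h(z) < \infty$, then $h^{-1}(z,\infty) \in \overline{\calF}$ and $h^{-1}(-\infty, z) \in \overline{\calF}$.}

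To establish that real-variable claim, I would recall the characterisation of the Peano--Jordan completion: $A \in \overline{\calF}$ iff for every $\epsilon > 0$ there exist $F_1, F_2 \in \calF$ with $F_1 \subseteq A \subseteq F_2$ and $\mu(F_2 \setminus F_1) < \epsilon$ (this is the standard definition used in~\cite{basile2000}; since $\phi_h(z) < \infty$ forces $\mu_*(h^{-1}(z-\delta, z+\delta)) < \infty$ for small $\delta$, the relevant sets have finite inner measure and the completion behaves well). Fix $\epsilon > 0$ and a small $\delta > 0$ with $\mu_*(h^{-1}(z-\delta, z+\delta)) < \phi_h(z) + \epsilon$. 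Apply $T_2$-measurability of $h$ with this $\epsilon$ and with the entourage $E = \{(s,t) : |s - t| < \delta\}$ to get a partition $A_0, A_1, \ldots, A_n \in \calF$, $\mu(A_0) < \epsilon$, with $h$ varying by less than $\delta$ on each $A_i$. Each $A_i$ ($i \ge 1$) then lies entirely in $\{h > z\}$, entirely in $\{h < z\}$, or entirely in $h^{-1}(z-\delta, z+\delta)$; call these index sets $P$, $N$, $M$. Set $F_1 := \bigcup_{i \in P} A_i \subseteq h^{-1}(z,\infty) \subseteq F_1 \cup A_0 \cup \bigcup_{i \in M} A_i =: F_2$. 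Then $F_1, F_2 \in \calF$ and $\mu(F_2 \setminus F_1) \le \mu(A_0) + \mu(\bigcup_{i \in M} A_i) < \epsilon + \mu_*(h^{-1}(z-\delta, z+\delta)) \le \epsilon + \phi_h(z) + \epsilon$; but more care is needed here, because I want the \emph{difference} small, not bounded by $\phi_h(z)$. The fix is to shrink $\delta$: as $\delta \to 0$, $\mu_*(h^{-1}(z-\delta, z+\delta)) \to \phi_h(z)$, and since $\bigcup_{i \in M} A_i$ is a measurable subset of $h^{-1}(z-\delta, z+\delta)$ it has measure $\le \mu_*(h^{-1}(z-\delta,z+\delta))$; meanwhile one can also push the lower bound $F_1'$ up to include the part of $M$-blocks lying in $(z,\infty)$... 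This is where the argument needs genuine attention.

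The main obstacle, then, is controlling the ``collar'' blocks indexed by $M$: a single application of $T_2$-measurability gives blocks straddling $z$ whose total measure is only bounded by the inner measure of a small neighbourhood of $z$, which need not be zero in the $\eta/\calE$ case (only in the $\zeta/\calZ$ case). The resolution is a two-scale argument: choose $\delta$ small enough that $\mu_*(h^{-1}(z-\delta,z+\delta)) < \phi_h(z) + \epsilon/2$, then apply $T_2$-measurability at a \emph{finer} scale $\delta' \ll \delta$, so that each block lies in one of $(z,\infty)$, $(-\infty,z)$, or $(z-\delta', z+\delta')$; the straddling blocks now sit inside $h^{-1}(z-\delta', z+\delta')$, which contains no point with $h = $ a fixed value far from $z$, and one separates $M$-blocks with $h$-values $> z$ from those with $h$-values $\le z$, assigning the former to $F_1$ and the latter's complement to $F_2$, leaving only blocks that literally contain a point mapping to exactly $z$ (or within $\delta'$ of it) unaccounted for — and the union of \emph{those} has inner-measure contribution bounded by $\mu_*(h^{-1}(z-\delta',z+\delta')) - (\text{mass with } h > z) - (\text{mass with } h < z)$, which tends to the ``jump at $z$'' and is absorbed as $\delta' \to 0$ into $\phi_h(z)$-continuity only if... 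In fact the cleanest route is to note $h^{-1}[z,\infty) = \bigcap_{\delta} h^{-1}(z - \delta, \infty)$ and $h^{-1}(z,\infty) = \bigcup_{\delta} h^{-1}(z+\delta,\infty)$, reducing to showing $h^{-1}(w,\infty) \in \overline{\calF}$ for $w$ ranging over a sequence approaching $z$ from one side with $\phi_h(w) < \infty$; since the set of $w$ with $\phi_h(w) > 0$ is countable-ish (a standard fact: boundary masses are summable on finite-measure portions), one picks such $w$'s where the collar has arbitrarily small inner measure, making the $M$-blocks negligible. I would then verify this countability-of-large-boundary-masses fact (or cite it from the real-valued section of the companion paper~\cite{keith2022}), complete the $\overline{\calF}$-membership, and finally assemble parts~1 and~2 by the subbase observations, the $\calE$ case being identical once one notes $g^{-1}(E)[y] = g^{-1}(E[g(y)])$ so that $f^{-1}(g^{-1}(E)[y]) = (g\circ f)^{-1}(E[g(y)])$ with $E[g(y)]$ a ray-complement of the required type.
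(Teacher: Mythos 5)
Your reduction is to a false statement, and this is the fatal gap. You reduce everything to the claim that if $h := g \circ f$ is $T_2$-measurable and $\phi_h(z) < \infty$, then $h^{-1}(z,\infty) \in \overline{\calF}$. This fails when $0 < \phi_h(z) < \infty$. Concretely: let $X = [0,1]$ with $\calF$ the interval field and $\mu$ Lebesgue, let $S$ be a fat Cantor set of measure $1/2$ with complementary intervals $(a_i, b_i)$ ordered by decreasing length, and let $h = 0$ on $S$ and $h = 1/i$ on $(a_i,b_i)$. Then $h$ is $T_2$-measurable (take the first $n$ intervals as blocks and lump the rest with $S$), $\phi_h(0) = 1/2 < \infty$, but $h^{-1}(0,\infty) = S^c$ is a dense open set of measure $1/2$, hence not in the Peano--Jordan completion $\overline{\calF}$. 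The difficulty you flag with the ``collar'' blocks is therefore not a technicality to be finessed by a two-scale argument: the mass $\phi_h(z)$ genuinely obstructs $\overline{\calF}$-membership of the ray at $z$ itself. Your attempted rescue --- writing $h^{-1}(z,\infty) = \bigcup_{\delta} h^{-1}(z+\delta,\infty)$ and using nearby endpoints $w$ with small boundary mass --- also fails, because $\overline{\calF}$ is only a field and is not closed under countable unions.

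The correct strategy (and the one the paper uses) is never to prove $f^{-1}(E) \in \overline{\calF}$ for $E \in \eta$. Instead, one shows that $\zeta$ (respectively $\calZ$), built from rays whose endpoints have \emph{zero} boundary mass limit, is \emph{also} a subbase for the topology (respectively an entourage subbase): given $y \in E = g^{-1}(V)$ with $V$ a ray whose endpoint $v$ satisfies $\phi_{g\circ f}(v) < \infty$, one uses the fact that $\phi_{g\circ f}^{-1}[0,\infty)$ is open and $\phi_{g\circ f}^{-1}(0)$ is dense in it (Lemma~\ref{phi_properties}(4)) to find $w$ strictly between $v$ and $g(y)$ with $\phi_{g\circ f}(w) = 0$, and replaces $V$ by the ray at $w$ still containing $g(y)$. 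For these perturbed rays, $\overline{\calF}$-membership of the preimage does hold (Lemma~\ref{phi_zero_general}), because there the collar has vanishing inner charge and your own $T_2$-partition argument goes through. You glimpse both ingredients --- perturbing the endpoint to where the collar is negligible, and the scarcity of points with positive boundary mass --- but you deploy them in service of the unprovable pointwise claim rather than in service of replacing the subbase. Your reduction of part~2 to part~1 via $g^{-1}(E)[y] = g^{-1}(E[g(y)])$ is fine in spirit, but the entourage case needs the additional uniform step of choosing a perturbed endpoint $w_z$ for every $z \in \R$ simultaneously and checking the resulting relation is still an entourage.
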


The proof depends on two lemmas concerning the properties of boundary mass limits, which will be stated and proved later in this section.

\begin{proof}
For~1, it will suffice to show that for any $E \in \eta$ and $y \in E$, there is $D \in \zeta$ with $y \in D \subseteq E$, for then $\zeta$ is also a subbase for the uniform topology. Moreover, $f^{-1}(D) \in \overline{\calF}$ for all $D \in \zeta$ by Lemma~\ref{phi_zero_general} below, hence $f$ is base measurable. So suppose $y \in E \in \eta$. Then $E = g^{-1}(V)$ for some $g \in \calS$ and $V \in \epsilon_g$, where $V$ is an interval of the form $(-\infty, v)$ or $(v, \infty)$ for some $v \in \R$ with $\phi_{g \circ f}(v) < \infty$. Note $g(y) \in V$. Lemma~\ref{phi_properties}(4) below implies there is $w$ strictly between $v$ and $g(y)$ with $\phi_{g \circ f}(w) = 0$. Hence $g(y) \in W \subseteq V$, where $W$ is whichever of the intervals $(-\infty, w)$ or $(w, \infty)$ contains $g(y)$. Then $D := g^{-1}(W) \in \zeta$ with $y \in D \subseteq E$, as required.

For~2, it will suffice to show that for any $E \in \calE$, there is $D \in \calZ$ with $D \subseteq E$, for then $\calZ$ is also an entourage subbase for $\calU$. Moreover, $f^{-1}(D[y]) \in \overline{\calF}$ for all $D \in \calZ$ and $y \in Y$ by Lemma~\ref{phi_zero_general} below, hence $f$ is uniformly base measurable. So suppose $E \in \calE$. Then $E = g^{-1}(V)$ for some $g \in \calS$ and $V \in \Xi$, where for each $z \in \R$, $V[z] \in \epsilon_g$ is an interval of the form $(-\infty, v_z)$ or $(v_z, \infty)$ for some $v_z \in \R$ with $\phi_{g \circ f}(v_z) < \infty$. Since $V \in \Xi$, there exists some $\gamma > 0$ such that $V \supset U_{\gamma} := \{ (u, u') \in \R \times \R : \lvert u - u' \rvert \leq \gamma \}$. Hence for each $z \in \R$, $[z - \gamma, z + \gamma] \subset V[z]$. Lemma~\ref{phi_properties}(4) implies there is $w_z$ strictly between $v_z$ and the interval $[z - \gamma, z + \gamma]$ with $\phi_{g \circ f}(w_z) = 0$. Hence $z \in W_z \subseteq V[z]$, where $W_z$ is whichever of the intervals $(-\infty, w_z)$ or $(w_z, \infty)$ contains $z$. Form the set $W := \cup_{z \in \R} \{ z \} \times W_z$. Then $W \in \Xi$, since $U_{\gamma} \subset W$, hence $D := g^{-1}(W) \in \calZ$.  Moreover, $W \subseteq V$ and thus $D \subseteq E$, as required.
\end{proof}

The proof of Part~2 of the preceding theorem implicitly invokes the axiom of choice, since $w_z$ must be chosen for each $z \in \R$. If $Y$ is uniformly locally compact, then~2 alternatively follows from~1 and from Theorem~\ref{uniform_equivalences}(4).

When $\calU$ is the weak uniformity generated by $\calS$ (for Part~2), or when the uniform topology is the weak topology generated by $\calS$ (for Part~1), it may be straightforward to establish the conditions of Theorem~\ref{T2_relationship_generalised}. For example, if $\mu$ is bounded then $\calE$ is trivially an entourage subbase for $\calU$ and $\eta$ is a subbase for the uniform topology, since $\phi_{g \circ f}^{-1}(\infty)$ is empty for all $g \in \calS$. Alternatively, if for each $A \in \calF$ either $\mu(A) < \infty$ or $\mu(A^c) < \infty$, then $\phi_{g \circ f}^{-1}(\infty)$ contains at most one element. 
If $Y$ is a Banach space and $f$ is integrable in the sense of Definition~III.2.17 of~\cite{dunford1958}, then $\phi_{g \circ f}^{-1}(\infty)$ does not contain any non-zero elements, and thus again the conditions of the theorem are met.

The main reason boundary mass limits are of interest here is that points of $\R$ with zero boundary mass limit induce sets in $Y$ with inverse images in $\overline{\calF}$, in the manner described in the following lemma. Note this result generalises Lemma~3.1 of~\cite{keith2022}, which applied only to real-valued, $T_2$-measurable functions. 

\begin{lemma} \label{phi_zero_general}
Consider a charge space $(X,\calF,\mu)$, a uniform space $Y$, and a function $f : X \rightarrow Y$. Suppose $g : Y \rightarrow \mathbb{R}$ is a uniformly continuous function with $\phi_{g \circ f}(z) = 0$ for some $z \in \mathbb{R}$. Define $D := g^{-1}(-\infty, z)$, and suppose at least one of the following conditions holds:
\begin{enumerate}
\item $f$ is $T_2$-measurable, or
\item $f$ is uniformly base measurable and $D$ is totally bounded.
\end{enumerate}
Then $f^{-1}(D) \in \overline{\calF}$ and $f^{-1}(\partial D) \in \overline{\calF}$ with $\overline{\mu}(f^{-1}(\partial D)) = 0$.
\end{lemma}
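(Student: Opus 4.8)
The plan is to treat both hypotheses through a common device. Write $h := g \circ f$, so that $f^{-1}(D) = h^{-1}(-\infty, z)$; since $g$ is continuous $D$ is open with $\overline{D} \subseteq g^{-1}(-\infty, z]$, whence $\partial D \subseteq g^{-1}\{z\}$ and $f^{-1}(\partial D) = f^{-1}(\overline{D}) \setminus f^{-1}(D) \subseteq h^{-1}\{z\}$. Fix $\epsilon > 0$; the hypothesis $\phi_{g \circ f}(z) = 0$ furnishes $\delta > 0$ with $\mu_*(h^{-1}(z - \delta, z + \delta)) < \epsilon$. I will trap $f^{-1}(D)$ between sets $E' \subseteq f^{-1}(D) \subseteq F$ of known measurability, arranged so that $F \setminus E'$ lies (essentially) inside $h^{-1}(z - \delta, z + \delta)$ \emph{and} in $\calF$, respectively in $\overline{\calF}$ --- the latter being essential, because the hypothesis controls only the \emph{inner} charge of $h^{-1}(z - \delta, z + \delta)$, whereas inner and outer charge agree on $\overline{\calF}$. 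Since $f^{-1}(\partial D)$ is disjoint from $E'$ and contained in $F$, it too is caught inside $F \setminus E'$.

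Case (1): $f$ is $T_2$-measurable. First, uniform continuity of $g$ upgrades this to $T_2$-measurability of the real-valued $h$ (an entourage $E$ with $f(A_i) \times f(A_i) \subseteq E$ controls the oscillation of $g$, hence of $h$, on $A_i$). Applying the latter with a parameter at most $\min(\epsilon, \delta/2)$ yields a partition $A_0, A_1, \dots, A_n \in \calF$ with $\mu(A_0) < \epsilon$ and the oscillation of $h$ on each $A_i$ ($i \ge 1$) below $\delta/2$. Sampling an $h$-value on each $A_i$ and noting its position relative to $z$ sorts the indices $i \ge 1$ into those with $h(A_i) \subseteq (-\infty, z)$, those with $h(A_i) \subseteq (z, \infty)$, and ``straddlers'' with $h(A_i) \subseteq (z - \delta, z + \delta)$. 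Let $E'$ be the union of the first class and $F := E' \cup A_0 \cup (\text{straddlers})$; then $E', F \in \calF$, $h^{-1}(-\infty, z) \subseteq F$, $h^{-1}\{z\} \subseteq F \setminus E'$, and $\mu(F \setminus E') \le \mu(A_0) + \mu_*(h^{-1}(z - \delta, z + \delta)) < 2\epsilon$ (the straddlers forming an $\calF$-subset of $h^{-1}(z - \delta, z + \delta)$).

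Case (2): $f$ is uniformly base measurable and $D$ is totally bounded; then so is $\overline{D}$. By uniform continuity of $g$, choose an entourage $E$ from the base $\calB$ of Definition~\ref{measurable_uniform}(5) small enough that $g$ varies by less than $\delta/4$ across any ball $E[y]$, and cover $\overline{D}$ by finitely many balls $E[y_1], \dots, E[y_m]$ with $y_j \in \overline{D}$, so $g(y_j) \le z$. Then $E[y_j] \subseteq D$ when $g(y_j) \le z - \delta/2$, and $E[y_j] \subseteq g^{-1}(z - \delta, z + \delta)$ otherwise. With $E' := \bigcup\{ f^{-1}(E[y_j]) : g(y_j) \le z - \delta/2 \}$ and $F := \bigcup_{j=1}^m f^{-1}(E[y_j])$, both in $\overline{\calF}$ by uniform base measurability, we get $E' \subseteq f^{-1}(D) \subseteq f^{-1}(\overline{D}) \subseteq F$ with $F \setminus E' \subseteq h^{-1}(z - \delta, z + \delta)$, so $\overline{\mu}(F \setminus E') = \mu_*(F \setminus E') \le \mu_*(h^{-1}(z - \delta, z + \delta)) < \epsilon$.

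In both cases $E'$ and $F$ sandwich $f^{-1}(D)$ with $F \setminus E' \in \overline{\calF}$ of measure $< 2\epsilon$; approximating $E'$ from within and $F$ from without by $\calF$-sets (nothing to do in case (1)) produces $\calF$-sandwiches of $f^{-1}(D)$ with arbitrarily small difference, so $f^{-1}(D) \in \overline{\calF}$. Moreover $f^{-1}(\partial D) \subseteq F \setminus E'$ in each case, so $\mu^*(f^{-1}(\partial D)) \le \mu^*(F \setminus E') < 2\epsilon$ (using $\mu^* = \overline{\mu}$ on $\overline{\calF}$); letting $\epsilon \to 0$ gives $\mu^*(f^{-1}(\partial D)) = 0 = \mu_*(f^{-1}(\partial D))$, hence $f^{-1}(\partial D) \in \overline{\calF}$ with $\overline{\mu}(f^{-1}(\partial D)) = 0$, using completeness of the Peano--Jordan completion. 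I expect the crux to be precisely the point stressed above: positioning the error set $F \setminus E'$ inside a field on which inner and outer charge coincide, so that the merely \emph{inner} information $\phi_{g \circ f}(z) = 0$ can be brought to bear at all. A secondary subtlety in case (2) is that preimages of balls are guaranteed to lie in $\overline{\calF}$ only for balls drawn from $\calB$, which forces the finite cover to be of $\overline{D}$ (not merely of $D$) by $\calB$-balls, exploiting that total boundedness passes to closures.
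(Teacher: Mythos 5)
Your proof is correct and takes essentially the same route as the paper's: in both cases you sandwich $f^{-1}(D)$ between measurable sets whose difference is an $\overline{\calF}$-set contained in $h^{-1}(z-\delta,z+\delta)$, which is exactly how the paper brings the inner-charge hypothesis $\phi_{g\circ f}(z)=0$ to bear. The only deviations are cosmetic (recentring the total-boundedness cover at points of $\overline{D}$ and sorting the balls by the value of $g$ at their centres rather than by containment in $D$, plus slightly different constants).
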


\begin{proof}
Define $h := g \circ f$. Since $\phi_h(z) = 0$, for any $\epsilon > 0$ there exists $\delta > 0$ such that $\mu_*(h^{-1}(z - \delta, z + \delta)) < \epsilon / 2$. 

Suppose $f$ is $T_2$-measurable. Then $h$ is $T_2$-measurable (this follows from the definitions of $T_2$-measurability and uniform continuity). Thus there is a partition of $X$ into sets $A_0, \ldots, A_n \in \calF$ such that $\mu(A_0) < \epsilon/2$ and $\lvert h(x) - h(x') \rvert < \delta$ for all $x, x' \in A_i$ and all $i \in \{ 1, \ldots, n \}$. Define $B$ to be the union of those elements of $\{ A_1, \ldots, A_n \}$ that are contained in $h^{-1}(-\infty, z)$, or the empty set if there are no such elements. Define $E$ to be the union of those elements of $\{ A_1, \ldots, A_n \}$ that intersect both $h^{-1}(-\infty, z]$ and $h^{-1}[z, \infty)$, or the empty set if there are no such elements. Then $E \subseteq h^{-1}(z - \delta, z + \delta)$, because if $A_i \subseteq E$ then for every pair $x, x' \in A_i$ with $h(x) \leq z \leq h(x')$ one must have $\lvert h(x) - h(x') \rvert < \delta$ and thus $\lvert h(x) - z \rvert < \delta$ and $\lvert h(x') - z \rvert < \delta$. Define $C := B \cup A_0 \cup E$. Then 
\[
\mu(C \setminus B) = \mu(A_0 \cup E) \leq \mu(A_0) + \mu_*(h^{-1}(z - \delta, z + \delta)) < \epsilon.
\]
Now $B, C \in \calF$ with $B \subseteq h^{-1}(-\infty, z) \subseteq C$, hence $f^{-1}(D) = h^{-1}(-\infty, z) \in \overline{\calF}$. Moreover, $f^{-1}(\partial D) \subseteq h^{-1}(z) \subseteq C \setminus B$, since $\partial D \subseteq g^{-1}(z)$. Hence $f^{-1}(\partial D) \in \overline{\calF}$ with $\overline{\mu}(f^{-1}(\partial D)) = 0$.

Alternatively, suppose $f$ is uniformly base measurable and $D$ is totally bounded. 
Since $g$ is uniformly continuous, the set $\{ (y, y') : \lvert g(y) - g(y') \rvert < \delta \}$ is an entourage, and thus contains an entourage $E$ with $f^{-1}(E[y]) \in \overline{\calF}$ for all $y \in Y$. Total boundedness of $D$ then implies there is some finite set $F \subseteq Y$ such that $\overline{D} \subseteq \cup_{y \in F} E[y]$. Define $B := \bigcup \{ f^{-1}(E[y]) : y \in F, E[y] \subseteq D \}$, or the empty set if there are no elements in that union. Define $C := \cup_{y \in F} f^{-1}(E[y])$. Then $B, C \in \overline{\calF}$ with $B \subseteq f^{-1}(D) \subseteq C$ and $C \setminus B \subseteq f^{-1}(g^{-1}(z - \delta, z + \delta))$. The latter implies $\overline{\mu}(C \setminus B) \leq \epsilon$, hence $f^{-1}(D) \in \overline{\calF}$. Moreover, $f^{-1}(\partial D) \subseteq C \setminus B$, hence $f^{-1}(\partial D) \in \overline{\calF}$ with $\overline{\mu}(f^{-1}(\partial D)) = 0$.
\end{proof}


In order to use Lemma~\ref{phi_zero_general} to show that a function is base measurable or uniformly base measurable, the proof of Theorem~\ref{T2_relationship_generalised} required the set of points in $\R$ with zero boundary mass limit to be dense in the set of points with finite boundary mass limit. Surprisingly, this turns out to be true for any real-valued function on a charge space, even if the function is not measurable in any sense. This is a consequence of the following lemma, which generalises Lemma~3.2 of~\cite{keith2022}: the latter lemma applied only to $T_2$-measurable functions on a bounded charge space.  

\begin{lemma} \label{phi_properties}
Consider a charge space $(X, \calF, \mu)$ and a function $f : X \rightarrow \mathbb{R}$. The boundary mass limit $\phi_f$ has the following properties.
\begin{enumerate}
\item For any $z \in \mathbb{R}$ with $\phi_f(z) = 0$, and any $\epsilon > 0$, there is $\delta > 0$ with $\phi_f(w) < \epsilon$ for all $w \in (z - \delta, z + \delta)$.

\item For any $z \in \phi_f^{-1}(0, \infty)$, there is $\delta > 0$ such that $\phi_f(z) > \sum_{i = 1}^n \phi_f(w_i)$ for any $w_1, \ldots, w_n \in (z - \delta, z + \delta) \setminus \{ z \}$.

\item The set $\phi_f^{-1}(0, \infty)$ is countable.

\item $\phi_f^{-1}[0, \infty)$ is an open set in which $\phi_f^{-1}(0)$ is dense.
\end{enumerate}
\end{lemma}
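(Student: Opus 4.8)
The plan rests on two elementary facts about the inner charge $\mu_*$: it is \emph{monotone} ($A \subseteq A' \Rightarrow \mu_*(A) \leq \mu_*(A')$), and it is \emph{superadditive on disjoint sets} (if $A_1, \dots, A_n$ are pairwise disjoint then $\mu_*(\bigcup_i A_i) \geq \sum_i \mu_*(A_i)$, seen by approximating each $A_i$ from within by a set in $\calF$ and invoking finite additivity of $\mu$). Since $\delta \mapsto \mu_*(f^{-1}(z - \delta, z + \delta))$ is non-decreasing, the limit in Definition~\ref{boundary_mass_limit_defn} exists in $[0, \infty]$ and equals $\inf_{\delta > 0} \mu_*(f^{-1}(z - \delta, z + \delta))$; in particular $\mu_*(f^{-1}(z - \rho, z + \rho)) \geq \phi_f(z)$ for every $\rho > 0$. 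Part~1 and the openness assertion in Part~4 will then both follow from monotonicity alone: if $\mu_*(f^{-1}(z - \delta_0, z + \delta_0)) < t$, then for every $w \in (z - \delta_0/2,\, z + \delta_0/2)$ and every sufficiently small $\rho > 0$ one has $(w - \rho, w + \rho) \subseteq (z - \delta_0, z + \delta_0)$, whence $\phi_f(w) \leq \mu_*(f^{-1}(w - \rho, w + \rho)) < t$. For Part~1 apply this with $t = \epsilon$ and a $\delta_0$ witnessing $\phi_f(z) = 0$, taking $\delta = \delta_0/2$; for openness of $\phi_f^{-1}[0, \infty) = \{ w : \phi_f(w) < \infty \}$ apply it with $t = \infty$ and a $\delta_0$ witnessing $\phi_f(z) < \infty$.

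Part~2 is the crux. Write $m := \phi_f(z) \in (0, \infty)$ and fix $\delta > 0$ with $\mu_*(f^{-1}(z - \delta, z + \delta)) < \tfrac{3}{2} m$ (possible since this quantity decreases to $m$); the point is that $\delta$ depends only on $z$, not on $n$ or the $w_i$. Given distinct $w_1, \dots, w_n \in (z - \delta, z + \delta) \setminus \{ z \}$, I would choose $\rho > 0$ small enough that the $n + 1$ intervals $(w_1 - \rho, w_1 + \rho), \dots, (w_n - \rho, w_n + \rho), (z - \rho, z + \rho)$ are pairwise disjoint and all contained in $(z - \delta, z + \delta)$ — finitely many constraints, each valid for all small $\rho$ because the $w_i$ are distinct, differ from $z$, and lie in the open interval $(z - \delta, z + \delta)$. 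The $f$-preimages of these intervals are then pairwise disjoint subsets of $f^{-1}(z - \delta, z + \delta)$, so superadditivity gives
\[
\sum_{i=1}^{n} \mu_*\bigl(f^{-1}(w_i - \rho,\, w_i + \rho)\bigr) + \mu_*\bigl(f^{-1}(z - \rho,\, z + \rho)\bigr) \;\leq\; \mu_*\bigl(f^{-1}(z - \delta,\, z + \delta)\bigr) \;<\; \tfrac{3}{2}\,m.
\]
Since $\mu_*(f^{-1}(z - \rho, z + \rho)) \geq m$ and $\phi_f(w_i) \leq \mu_*(f^{-1}(w_i - \rho, w_i + \rho))$, this yields $\sum_{i=1}^{n} \phi_f(w_i) \leq \tfrac{3}{2} m - m = \tfrac{1}{2} m < \phi_f(z)$, as required.

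Part~3 is then a routine consequence. For $k \in \N$ set $S_k := \{ z : 1/k \leq \phi_f(z) < \infty \}$, so $\phi_f^{-1}(0, \infty) = \bigcup_{k} S_k$. Fix $z \in S_k$ with associated $\delta$ from Part~2. If $(z - \delta, z + \delta)$ contained $n$ points of $S_k$ other than $z$, using them as $w_1,\dots,w_n$ in Part~2 would force $n/k \leq \sum_i \phi_f(w_i) < \phi_f(z)$, impossible once $n \geq k\,\phi_f(z)$; hence $(z - \delta, z + \delta) \cap S_k$ is finite. The intervals $(z - \delta, z + \delta)$ over $z \in S_k$ thus form an open cover of the (subspace) Lindel\"of space $S_k$ by sets meeting $S_k$ finitely, and a countable subcover exhibits $S_k$ — and therefore $\phi_f^{-1}(0, \infty)$ — as countable.

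For the density claim in Part~4, note $\phi_f^{-1}[0, \infty) = \phi_f^{-1}(0) \cup \phi_f^{-1}(0, \infty)$, the second set being countable by Part~3. Given $z$ with $\phi_f(z) < \infty$ and $\epsilon > 0$, the openness already established supplies $\delta \in (0, \epsilon]$ with $(z - \delta, z + \delta) \subseteq \phi_f^{-1}[0, \infty)$; since a nonempty open interval is uncountable while $\phi_f^{-1}(0, \infty)$ is countable, some $w \in (z - \delta, z + \delta)$ satisfies $\phi_f(w) = 0$, so $\phi_f^{-1}(0)$ is dense in $\phi_f^{-1}[0, \infty)$. The only genuine difficulty is Part~2: making the estimate \emph{uniform} in $n$ and in the $w_i$ is exactly what forces the a-priori choice $\mu_*(f^{-1}(z - \delta, z + \delta)) < \tfrac{3}{2} m$ together with the device of reserving the central interval $(z - \rho, z + \rho)$ to absorb the mass $m$ concentrated "at" $z$; everything else is bookkeeping with monotonicity and superadditivity of $\mu_*$.
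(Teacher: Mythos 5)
Your proof is correct, but it reaches Parts~1 and~2 by a genuinely different and arguably cleaner route than the paper. The paper proves both by contradiction; for Part~2 it runs an infinite iterated nested-interval construction, producing at stage $k$ a new batch of points $w_i^{(k)}$ and a new radius $\delta^{(k+1)}$, and concludes $\mu_*(f^{-1}(z-\delta^{(1)},z+\delta^{(1)}))=\infty$ for every $\delta^{(1)}$, contradicting $\phi_f(z)<\infty$ (an argument that implicitly uses dependent choice to sustain the iteration). Your one-shot ``budget'' argument --- fix $\delta$ with $\mu_*(f^{-1}(z-\delta,z+\delta))<\tfrac{3}{2}\phi_f(z)$, then reserve the central interval $(z-\rho,z+\rho)$ to absorb mass $\geq \phi_f(z)$ and let superadditivity bound the rest by $\tfrac{1}{2}\phi_f(z)$ --- gets the same uniform-in-$n$ conclusion directly and without iteration; it correctly isolates that $\delta$ must be chosen before the $w_i$. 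Both your proof and the paper's tacitly read the $w_i$ in Part~2 as distinct (the paper needs this to make the intervals around the $w_i^{(1)}$ pairwise disjoint, and the statement is false for repeated $w_i$), so making that explicit is a feature, not a gap. Your direct monotonicity argument for Part~1 and for openness in Part~4 replaces the paper's contradiction and its appeal to ``1 and 2 together''. Parts~3 and the density claim are essentially the paper's arguments (local finiteness of $\{\phi_f \geq 1/k\}$, hereditary Lindel\"of, then uncountability of an interval versus countability of $\phi_f^{-1}(0,\infty)$); the only difference is that you route the density claim through the global countability of Part~3, whereas the paper deliberately uses only the local countability of $\phi_f^{-1}(0,\infty)\cap(z-\delta,z+\delta)$ so that Part~3 (and its choice-dependent Lindel\"of step) is not a dependency of Part~4 --- your Part~3 argument already contains the local statement, so this is a stylistic rather than a logical difference.
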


\begin{proof}
Property~1 can be shown by contradiction. Suppose it is false, so that there is $z \in \mathbb{R}$ with $\phi_f(z) = 0$ and $\epsilon > 0$ such that for any $\delta > 0$ there is $w \in (z - \delta, z + \delta)$ with $\phi_f(w) \geq \epsilon$. Consider any such $\delta$ and $w$,  then $\mu_*(f^{-1}(z - \delta, z + \delta)) \geq \phi_f(w) \geq \epsilon$. Letting $\delta \rightarrow 0$ gives $\phi_f(z) \geq \epsilon$, a contradiction.

Property~2 can similarly be shown by contradiction. Suppose it is false, implying there is $z \in \phi_f^{-1}(0, \infty)$ such that for any $\delta > 0$ there are $w_1, \ldots, w_n \in (z - \delta, z + \delta) \setminus \{ z \}$ with $\phi_f(z) \leq \sum_{i=1}^n \phi_f(w_i)$. Choose $\delta^{(1)} > 0$ and obtain $w_1^{(1)}, \ldots, w_{n_1}^{(1)}  \in (z - \delta^{(1)}, z + \delta^{(1)}) \setminus \{ z \}$ with $\phi_f(z) \leq \sum_{i=1}^n \phi_f(w_i^{(1)})$. Choose $\delta^{(2)} \in (0, \delta^{(1)})$ such that the intervals $\{ (w_i^{(1)} - \delta^{(2)}, w_i^{(1)} + \delta^{(2)}) \}_{i=1}^{n_1}$ and the interval $(z - \delta^{(2)}, z + \delta^{(2)})$ are pairwise disjoint subintervals of $(z - \delta^{(1)}, z + \delta^{(1)})$. But then there are $w_1^{(2)}, \ldots, w_{n_2}^{(2)} \in (z - \delta^{(2)}, z + \delta^{(2)}) \setminus \{ z \}$ with $\phi_f(z) \leq \sum_{i=1}^{n_2} \phi_f(w_i^{(2)})$. This process can be iterated for all $k \in \N$ to obtain $\delta^{(k)}$ and $w_1^{(k)}, \ldots, w_{n_k}^{(k)} \in (z - \delta^{(k)}, z + \delta^{(k)}) \setminus \{ z \}$. Now $(z - \delta^{(1)}, z + \delta^{(1)})$ contains all the pairwise disjoint intervals 
\[
\{ (w_i^{(k)} - \delta^{(k+1)}, w_i^{(k)} + \delta^{(k+1)}) : i \in \{ 1, \ldots, n_k\}, k \in \N \}.
\]
Hence for any $K \in \N$,
\begin{eqnarray*}
\mu_*(f^{-1}(z - \delta^{(1)}, z + \delta^{(1)})) & \geq & \sum_{k=1}^K \sum_{i=1}^{n_k} \mu_*(f^{-1}(w_i^{(k)} - \delta^{(k+1)}, w_i^{(k)} + \delta^{(k+1)})) \\
& \geq & \sum_{k=1}^K \sum_{i=1}^{n_k} \phi_f(w_i^{(k)}) \\
& \geq & K \phi_f(z).
\end{eqnarray*}
Thus $\mu_*(f^{-1}(z - \delta^{(1)}, z + \delta^{(1)})) = \infty$, since $\phi_f(z) > 0$. But this holds for any $\delta^{(1)} > 0$, hence $\phi_f(z) = \infty$, a contradiction.

To show Property~3, consider $z \in \phi_f^{-1}(0, \infty)$ and $\delta_z > 0$ with the property specified in~2. Then $\phi_f^{-1}(1/n, \infty) \cap (z - \delta_z, z + \delta_z)$ must be a finite set for each $n \in \N$, hence $\phi_f^{-1}(0, \infty) \cap (z - \delta_z, z + \delta_z)$ is countable. The sets 
\[
\{ \phi_f^{-1}(0, \infty) \cap (z - \delta_z, z + \delta_z) : z \in \phi_f^{-1}(0, \infty) \}
\]
form an open cover of $\phi_f^{-1}(0, \infty)$ in the subspace topology. Hence there is a countable subcover, since $\mathbb{R}$ is hereditarily Lindel\"of. Thus $\phi_f^{-1}(0, \infty)$ is a countable union of countable sets, and is therefore countable.

For~4, note~1 and~2 together imply $\phi_f^{-1}[0, \infty)$ is an open set. Consider $z \in \phi_f^{-1}(0, \infty)$ and any $\delta > 0$ with the property specified in~2. Then $(z - \delta, z + \delta)$ does not contain any element of $\phi_f^{-1}(\infty)$. Moreover, $\phi_f^{-1}(0, \infty) \cap (z - \delta, z + \delta)$ is countable, as in the proof of~3.  Hence $(z - \delta, z + \delta)$ must contain an element of $\phi_f^{-1}(0)$, implying $\phi_f^{-1}(0)$ is dense in $\phi_f^{-1}[0, \infty)$.
\end{proof}

In the proof of Part~3 of the preceding lemma, the claim that $\mathbb{R}$ is hereditarily Lindel\"of is equivalent to the claim that the axiom of countable choice holds for subsets of $\R$~\cite{herrlich1997}. Note Part~3 is not used in any other proofs.

\section{Measurable real-valued functions} \label{real_valued_functions}

The various notions of measurability described in the previous two sections will now be applied to real-valued functions. Moreover, additional notions of measurability specific to real-valued functions will be introduced.

A real-valued function $f$ on a measurable space $(X, \calF)$ is conventionally measurable iff $f^{-1}(y, \infty) \in \calF$ for all $y \in \mathbb{R}$. (Some authors define measurability of real-valued functions in this way: see Definition 121C and Theorem~121E(f) of~\cite{fremlin2011}.) This condition can be generalised several ways.

\begin{definition} \label{ray_measurable}
Consider a charge space $(X, \calF, \mu)$. A function $f : X \rightarrow \mathbb{R}$ is 
\begin{enumerate}
\item {\em right ray measurable} if $f^{-1}(y, \infty) \in \overline{\calF}$ for all $y \in D$, where $D$ is a dense subset of $\mathbb{R}$; 

\item {\em left ray measurable} if $f^{-1}(-\infty, y) \in \overline{\calF}$ for all $y \in D$, where $D$ is a dense subset of $\mathbb{R}$; 

\item {\em ray measurable} if both $f^{-1}(y, \infty) \in \overline{\calF}$ and $f^{-1}(-\infty, y) \in \overline{\calF}$ for all $y \in D$, where $D$ is a dense subset of $\mathbb{R}$;



\item {\em regularly $T_1$-measurable} if there is a sequence $\{ s_i \}_{i=1}^{\infty}$ of $\overline{\calF}$-simple functions converging hazily to $f$,  where
\begin{enumerate}
\item $s_i := \sum_{k=1}^{n_i} y_{ik} (I_{A_{ik}^+} - I_{A_{ik}^-})$ with $n_i := i 2^i - 1$; 
\item $y_{ik} := k 2^{-i} \delta$ for each $k \in \{ 1, \ldots, n_i + 1 \}$, $i \in \N$ and some $\delta > 0$; 
\item $A_{ik}^+ := f^{-1}( y_{ik}, y_{i,k+1} ] \in \overline{\calF}$ and $A_{ik}^- := f^{-1}[-y_{i,k+1}, -y_{ik}) \in \overline{\calF}$ for each $k \in \{ 1, \ldots, n_i \}$ and $i \in \N$.
\end{enumerate}
\end{enumerate}
\end{definition}

Some logical relationships between the different forms of measurability can be stated immediately. In what follows, the term ``$\overline{\calF}$-measurable'' is used in the sense of Definition~\ref{greco_measurable}. 

\begin{proposition} \label{ray_and_base_proposition}
Consider a charge space $(X, \calF, \mu)$ and a function $f : X \rightarrow \mathbb{R}$.
\begin{enumerate}
\item If $f$ is ray measurable then it is left ray measurable and right ray measurable.


\item If $f$ is left or right ray measurable then it is $\overline{\calF}$-measurable.

\item If $f$ is $\overline{\calF}$-measurable then it is base measurable.

\item $f$ is uniformly base measurable iff it is base measurable.

\item $f$ is ray measurable iff $f^+$ and $f^-$ are ray measurable. 



\item If $f$ is ray measurable then $\psi(f)$ is ray measurable for any homeomorphism $\psi : \R \rightarrow \R$. (Similar statements hold for $\calF$-measurable and base measurable functions.)

\item If $\calF$ is a $\sigma$-field and $f$ is conventionally measurable, then $f$ is ray measurable. 

\item If $(X, \calF, \mu)$ is a complete measure space and $f$ is ray measurable, then $f$ is conventionally measurable. 

\end{enumerate}
\end{proposition}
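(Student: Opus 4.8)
The plan is to prove the eight parts essentially independently, disposing of the bookkeeping items quickly and reserving the real work for Part~3. Parts~1, 4, 7 and~8 are almost immediate: in Part~1 the single dense set $D$ witnessing ray measurability also witnesses left and right ray measurability; Part~4 is Theorem~\ref{uniform_equivalences}(4) applied to $Y=\R$ with its usual uniformity, which is uniformly locally compact because the entourage $\{(x,x'):|x-x'|\le 1\}$ has compact sections $[x-1,x+1]$; in Part~7 conventional measurability gives $f^{-1}(y,\infty),f^{-1}(-\infty,y)\in\calF\subseteq\overline{\calF}$ for every $y\in\R$, so $D=\R$ works; and in Part~8 completeness forces $\overline{\calF}=\calF$ (as in the proof of Theorem~\ref{uniform_equivalences}(3)) with $\calF$ a $\sigma$-field, so approximating an arbitrary real from within the dense witnessing set by a monotone sequence writes each $f^{-1}(a,\infty)$ and $f^{-1}(-\infty,a)$ as a countable union of $\calF$-sets, whence every open interval, and then every open set, has preimage in $\calF$.

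For Part~2, suppose $f$ is right ray measurable with dense witnessing set $D$ (the left case is symmetric). Given $a>b>0$, density yields $c\in D\cap(b,a)$, and $H:=f^{-1}(c,\infty)\in\overline{\calF}$ sandwiches $(f^+)^{-1}(a,\infty)=f^{-1}(a,\infty)$ inside $(f^+)^{-1}(b,\infty)=f^{-1}(b,\infty)$, so $f^+$ is $\overline{\calF}$-measurable; choosing instead $c\in D\cap(-a,-b)$ and $H:=X\setminus f^{-1}(c,\infty)=f^{-1}(-\infty,c]$ does the same for $f^-$. For Part~5, a routine sign analysis identifies the open rays of $f^{\pm}$ with those of $f$ --- for instance $(f^+)^{-1}(y,\infty)$ equals $f^{-1}(y,\infty)$ when $y>0$ and $X$ when $y\le 0$, while $(f^-)^{-1}(y,\infty)$ equals $f^{-1}(-\infty,-y)$ when $y>0$ and $X$ when $y<0$, with analogous formulas for left rays --- and these identities run both ways, so one need only check that a single dense witnessing set can be arranged (the various sets produced differ only in singletons and in being dense on $(0,\infty)$ or on $(-\infty,0)$). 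For Part~6, a self-homeomorphism $\psi$ of $\R$ is strictly monotone; if $\psi$ is increasing then $(\psi\circ f)^{-1}(y,\infty)=f^{-1}(\psi^{-1}(y),\infty)$ and $(\psi\circ f)^{-1}(-\infty,y)=f^{-1}(-\infty,\psi^{-1}(y))$, while if $\psi$ is decreasing these two are interchanged; since a homeomorphism carries dense sets to dense sets, $\psi(D)$ witnesses ray measurability of $\psi\circ f$. The base-measurable version transports each local base $\calB_y$ to $\{\psi(B):B\in\calB_{\psi^{-1}(z)}\}$ at $z$, using $(\psi\circ f)^{-1}(\psi(B))=f^{-1}(B)$, and the $\overline{\calF}$-measurable version follows from the field-level reformulation ``for all reals $a>b$ there is $H\in\overline{\calF}$ with $f^{-1}(a,\infty)\subseteq H\subseteq f^{-1}(b,\infty)$'' (and its complemented, left-ray counterpart), both of which are manifestly invariant under monotone homeomorphisms.

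Part~3 is the crux: $\overline{\calF}$-measurability must be upgraded to base measurability. The obstruction is that $\overline{\calF}$-measurability supplies, for thresholds $a>b$, only a set $H\in\overline{\calF}$ \emph{sandwiched} between $f^{-1}(a,\infty)$ and $f^{-1}(b,\infty)$ --- never the exact preimage of a ray --- and no single such $H$ can pin down a given threshold. I would first unwind $H\in\overline{\calF}$ (membership in $\overline{\calF}$ is witnessed by $\calF$-sets $B$ and $C$ with $B\subseteq H\subseteq C$ and $\mu(C\setminus B)$ arbitrarily small) to get from the $\overline{\calF}$-measurability of $f^+$ the inequality $\mu^*(f^{-1}(a,\infty))\le\mu_*(f^{-1}(b,\infty))$ for all $a>b>0$. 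Since $c\mapsto\mu^*(f^{-1}(c,\infty))$ is non-increasing it is right-continuous off a countable set, and at each such (dense) point $c$ the inequality forces $\mu^*(f^{-1}(c,\infty))=\mu_*(f^{-1}(c,\infty))$, hence $f^{-1}(c,\infty)\in\overline{\calF}$; symmetrically, $\overline{\calF}$-measurability of $f^-$ places $f^{-1}(-\infty,d)$ in $\overline{\calF}$ for $d$ in a dense subset of $(-\infty,0)$. Writing $D_1\subseteq(0,\infty)$ and $D_2\subseteq(-\infty,0)$ for these dense sets, I would assemble neighbourhood bases by a three-way split on the sign of $y$: for $y>0$, $\calB_y=\{(c,d]:c,d\in D_1,\ c<y<d\}$ (here $f^{-1}((c,d])=f^{-1}(c,\infty)\setminus f^{-1}(d,\infty)\in\overline{\calF}$); for $y<0$, $\calB_y=\{[c,d):c,d\in D_2,\ c<y<d\}$; and for $y=0$, $\calB_0=\{[c,d]:c\in D_2,\ d\in D_1\}$ (here $f^{-1}([c,d])=(X\setminus f^{-1}(-\infty,c))\cap(X\setminus f^{-1}(d,\infty))\in\overline{\calF}$). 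Density of $D_1$ in $(0,\infty)$ and of $D_2$ in $(-\infty,0)$ makes each of these a genuine neighbourhood base, giving base measurability.

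The step I expect to cost the most effort is the passage from ``$c$ is a continuity point'' to ``$f^{-1}(c,\infty)\in\overline{\calF}$'' in the unbounded case: there a continuity point may satisfy $\mu^*(f^{-1}(c,\infty))=\mu_*(f^{-1}(c,\infty))=\infty$, so the finite-squeezing criterion is unavailable and one must argue from the structure of the Peano--Jordan completion directly --- presumably by restricting to sets $S$ of finite charge, establishing $\mu^*(f^{-1}(a,\infty)\cap S)\le\mu_*(f^{-1}(b,\infty)\cap S)$ there, and selecting $D_1,D_2$ to meet the continuity requirements that actually matter. This is exactly the point at which the advertised extension from a bounded to an unbounded charge (the improvement over Theorem~3.4 of~\cite{keith2022}) does real work, and I expect the final proof to concentrate its care there.
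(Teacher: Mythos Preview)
The paper does not actually prove this proposition: it states only that Part~4 follows from Theorem~\ref{uniform_equivalences}(4) (since $\R$ is uniformly locally compact) and that ``proofs of the other claims are straightforward, and are omitted.'' So there is no detailed argument in the paper against which to compare yours. Your treatments of Parts~1, 2, 4, 5, 6, 7 and~8 are correct and are indeed routine, in line with the paper's assertion.

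For Part~3, however, your instinct that this is the crux is well-founded, and your monotone-function argument is the natural one in the bounded case: the inequality $\mu^*(f^{-1}(a,\infty))\le\mu_*(f^{-1}(b,\infty))$ for $a>b>0$ does follow from the sandwich $f^{-1}(a,\infty)\subseteq H\subseteq f^{-1}(b,\infty)$ with $H\in\overline{\calF}$, and the common continuity points of the two monotone functions then give a dense set $D_1\subseteq(0,\infty)$ on which outer and inner charge agree. But the unbounded obstacle you identify is real: when $\mu^*(f^{-1}(c,\infty))=\mu_*(f^{-1}(c,\infty))=\infty$, this equality does \emph{not} place $f^{-1}(c,\infty)$ in $\overline{\calF}$ under the Peano--Jordan definition used in the paper (approximation from inside and outside by $\calF$-sets with small gap). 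Your proposed fix --- restrict to finite-charge $S$, get $\mu^*(f^{-1}(a,\infty)\cap S)\le\mu_*(f^{-1}(b,\infty)\cap S)$, then select $D_1,D_2$ appropriately --- is not obviously workable as written, because the continuity-point sets depend on $S$ and their intersection over all finite-charge $S$ need not be dense.

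A cleaner route through the unbounded case is to observe that for each $\delta>0$ the two sandwiches $f^{-1}(c,\infty)\subseteq H_1\subseteq f^{-1}(c-\delta,\infty)$ and $f^{-1}(c+\delta,\infty)\subseteq H_2\subseteq f^{-1}(c,\infty)$ give $H_2\subseteq f^{-1}(c,\infty)\subseteq H_1$ with $H_1\setminus H_2\in\overline{\calF}$ and $\overline{\mu}(H_1\setminus H_2)=\mu_*(H_1\setminus H_2)\le\mu_*(f^{-1}(c-\delta,c+\delta])$. Thus $f^{-1}(c,\infty)\in\overline{\calF}$ whenever $\phi_f(c)=0$, without any finiteness hypothesis. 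This reduces Part~3 to knowing that $\phi_f^{-1}(0)$ is dense in $(0,\infty)$ and in $(-\infty,0)$ --- which is precisely the hypothesis of Theorem~\ref{real_valued_measurability}, not a free fact. So either the paper intends a more elementary argument for Part~3 that neither of us has spotted, or the word ``straightforward'' is doing more work than it should; in any case your diagnosis that the unbounded extension concentrates its difficulty here is accurate, even if your proposed repair needs sharpening.
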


Claim~4 is immediate from Theorem~\ref{uniform_equivalences}(4), since $\R$ is uniformly locally compact. Proofs of the other claims are straightforward, and are omitted.  

Additional relationships between these various types of measurability hold under the conditions of the following theorem, which generalises Theorem~3.4 of~\cite{keith2022} in two ways: 1) it allows $\mu$ to be an unbounded charge; and 2) it pertains to additional forms of measurability.

\begin{theorem} \label{real_valued_measurability}
Consider a charge space $(X, \calF, \mu)$ and a function $f : X \rightarrow \mathbb{R}$. Suppose $\phi_f^{-1}[0, \infty)$ is dense in $\mathbb{R}$. 

\begin{enumerate}
\item The following statements are logically equivalent.
\begin{enumerate}
\item $f$ is ray measurable.
\item $f$ is left ray measurable.
\item $f$ is right ray measurable.
\item $f$ is $\overline{\calF}$-measurable.
\item $f^+$ and $f^-$ are base measurable.
\item $f$ is base measurable and $\exists y \in \mathbb{R}$ with $f^{-1}(y, \infty) \in \overline{\calF}$.
\end{enumerate}

\item The following statements are logically equivalent.
\begin{enumerate}
\item $f$ is regularly $T_1$-measurable.
\item $f$ is $T_1$-measurable.
\item $f$ is $T_2$-measurable.
\item $f$ is ray measurable and smooth.
\item $f$ is base measurable and smooth. 
\end{enumerate}

\end{enumerate}
\end{theorem}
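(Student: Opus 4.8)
The plan is to establish each part as a cycle of implications, using Proposition~\ref{ray_and_base_proposition} and Theorems~\ref{uniform_equivalences}, \ref{T2_relationship_generalised} for the ``free'' arrows and reserving the lemmas on $\phi_f$ for the genuinely new content. The only consequence of the standing hypothesis I would actually invoke is that $\phi_f^{-1}(0)$ is \emph{dense} in $\mathbb{R}$: the set $\phi_f^{-1}[0,\infty)$ is open with $\phi_f^{-1}(0)$ dense in it by Lemma~\ref{phi_properties}(4), and it is dense in $\mathbb{R}$ by hypothesis, so $\overline{\phi_f^{-1}(0)}$ is a closed set containing a dense set. A recurring device is that for $z\in\phi_f^{-1}(0)$ the fibre $f^{-1}(z)$ is $\overline{\mu}$-negligible: $\mu_*(f^{-1}(z))\le\phi_f(z)=0$, and against any finite-measure $F\in\calF$ the set $f^{-1}(z)\cap F$ sits inside $f^{-1}(z-\delta,z+\delta)\cap F$, whose outer measure is at most $\mu_*(f^{-1}(z-\delta,z+\delta))\to0$ whenever those preimages lie in $\overline{\calF}$ (e.g. when $f$ is base measurable). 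Hence at such $z$ the left ray, right ray and closed ray all lie in $\overline{\calF}$ together.

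For Part~1, the implications (a)$\Rightarrow$(b), (a)$\Rightarrow$(c), (b)$\Rightarrow$(d), (c)$\Rightarrow$(d) come from Proposition~\ref{ray_and_base_proposition}(1)--(2); (d)$\Rightarrow$(e) from Definition~\ref{greco_measurable} together with Proposition~\ref{ray_and_base_proposition}(3); and (e)$\Rightarrow$(f) is routine bookkeeping, base measurability of $f$ at a level $y\ne0$ being inherited from $f^{+}$ or $f^{-}$, at $0$ from both, while $(f^{-})^{-1}[0,\delta)=f^{-1}(-\delta,\infty)$ supplies the required right ray. The content is in (f)$\Rightarrow$(d) and (d)$\Rightarrow$(a). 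For (f)$\Rightarrow$(d) (which needs no hypothesis on $\phi_f$) I would fix $0<b<a$, pick $z\in(b,a)$, and cover the compact interval joining the anchor level $y_0$ to $z$ by finitely many base-measurable intervals of small enough diameter to control the exposed endpoint; the component $(\gamma,\delta)$ of their union containing this interval has $f^{-1}(\gamma,\delta)\in\overline{\calF}$ and $\gamma<y_0<\delta$, so $f^{-1}(\gamma,\infty)=f^{-1}(y_0,\infty)\cup f^{-1}(\gamma,\delta)\in\overline{\calF}$; distinguishing the cases $y_0\le b$, $b<y_0<a$, $y_0\ge a$ one arranges that either $\gamma$ or $\delta$ lands in $(b,a)$, so that $f^{-1}(\gamma,\infty)$ or $f^{-1}[\delta,\infty)$ is a set Greco's criterion accepts, giving $\overline{\calF}$-measurability of $f^{+}$; the same run with $-f$ handles $f^{-}$. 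For (d)$\Rightarrow$(a), given $z\in\phi_f^{-1}(0)\cap(0,\infty)$ I would take a base-measurable interval $(b'',a'')\ni z$, pick $a^{\star}\in(z,a'')$, apply Greco's criterion to get $H\in\overline{\calF}$ with $f^{-1}(a^{\star},\infty)\subseteq H\subseteq f^{-1}(b'',\infty)$, and verify $H\setminus f^{-1}(b'',a'')\subseteq f^{-1}(z,\infty)\subseteq H\cup f^{-1}(b'',a'')$ with the gap equal to $f^{-1}(b'',a'')$, whose $\overline{\mu}$-measure is $\mu_*(f^{-1}(b'',a''))\le\mu_*(f^{-1}(z-\delta,z+\delta))\to\phi_f(z)=0$ as the interval shrinks; hence $f^{-1}(z,\infty)\in\overline{\calF}$, and running this for $f$ and for $-f$ over the dense set $\phi_f^{-1}(0)$ yields ray measurability.

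For Part~2, (b)$\Leftrightarrow$(c) and ``$T_1$ or $T_2$ implies smooth'' are Theorem~\ref{uniform_equivalences}(1); (d)$\Rightarrow$(e) is ray measurable $\Rightarrow$ base measurable; (e)$\Rightarrow$(c) is immediate from Proposition~\ref{ray_and_base_proposition}(4) followed by Theorem~\ref{uniform_equivalences}(5); and (c)$\Rightarrow$(d) is Lemma~\ref{phi_zero_general}(1) with $g=\mathrm{id}_{\mathbb{R}}$, which for each $z\in\phi_f^{-1}(0)$ gives $f^{-1}(-\infty,z)\in\overline{\calF}$ and $f^{-1}(z)\in\overline{\calF}$ with $\overline{\mu}(f^{-1}(z))=0$, hence $f^{-1}(z,\infty)\in\overline{\calF}$, while smoothness is Theorem~\ref{uniform_equivalences}(1) again. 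The remaining and hardest implication is (c)$\Rightarrow$(a), which must produce the rigid dyadic scaffolding of Definition~\ref{ray_measurable}(4). The difficulty is that the levels $y_{ik}=k2^{-i}\delta$ for a \emph{single} $\delta$ must all be ``good'', meaning $f^{-1}(y_{ik},\infty)$ and $f^{-1}(-\infty,y_{ik})$ lie in $\overline{\calF}$; the set of good levels contains $\phi_f^{-1}(0)$, so its complement is contained in $\bigcup_{n}\phi_f^{-1}[1/n,\infty]$, and each $\phi_f^{-1}[1/n,\infty]$ is nowhere dense (Lemma~\ref{phi_properties}(1), once $\phi_f^{-1}(0)$ is dense, makes its complement contain a dense open set), so the bad levels form a meager set. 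Then the bad scales $\delta$ --- those for which some $q\delta$ with $q$ a nonzero dyadic rational is a bad level --- form a countable union $\bigcup_{q}q^{-1}(\text{bad levels})$ of scalings of a meager set, hence a meager set, so a good $\delta$ exists by the Baire category theorem. With such a $\delta$ the $s_i$ are $\overline{\calF}$-simple, and since $\lvert s_i(x)-f(x)\rvert\le2^{-i}\delta$ whenever $\lvert f(x)\rvert\le i\delta$ while smoothness controls $\mu^{*}(\{\lvert f\rvert>i\delta\})$, one gets $s_i\to f$ hazily, i.e. $f$ is regularly $T_1$-measurable; finally (a)$\Rightarrow$(b) holds because an $\overline{\calF}$-simple function is hazily approximable by $\calF$-simple functions, so hazy convergence to $f$ persists.

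I expect the Baire-category selection of the scale $\delta$ in (c)$\Rightarrow$(a) to be the main obstacle: it is the place where the hypothesis on $\phi_f$ is used in an essential, non-obvious way, and checking afterwards that the resulting $s_i$ really converge hazily in the unbounded-charge setting (where hazy convergence is tracked through $\mu^{*}$) requires both the smoothness hypothesis and care at the endpoints of the dyadic intervals. In Part~1 the corresponding delicate point is (d)$\Rightarrow$(a): converting Greco's purely order-theoretic criterion into an $\overline{\calF}$-sandwich of $f^{-1}(z,\infty)$ while keeping the error confined to a set whose $\overline{\mu}$-measure is governed by $\phi_f(z)=0$, and it is here that allowing $\mu$ to be unbounded --- the feature absent from Theorem~3.4 of~\cite{keith2022} --- forces everything to be argued locally against finite-measure members of $\calF$.
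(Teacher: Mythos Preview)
Your Part~2 is essentially the paper's proof: the Baire-category selection of $\delta$ for (c)$\Rightarrow$(a), the use of Lemma~\ref{phi_zero_general} (condition~1) for (c)$\Rightarrow$(d), and the route (e)$\Rightarrow$(c) via Proposition~\ref{ray_and_base_proposition}(4) and Theorem~\ref{uniform_equivalences}(5) all match exactly.

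In Part~1 you diverge from the paper. The paper closes the cycle by proving (1e)$\Rightarrow$(1a) directly: it views $f^{+}$ as a map into $Y=[0,\infty)$, notes that $f^{+}$ is uniformly base measurable (Proposition~\ref{ray_and_base_proposition}(4)), and applies condition~2 of Lemma~\ref{phi_zero_general} with $g$ the inclusion, exploiting that $[0,z)$ is totally bounded. For (1f)$\Rightarrow$(1a) it uses a shift: $(f-y)^{+}$ and $(f-y)^{-}$ are base measurable, so (1e)$\Rightarrow$(1a) applies to $f-y$, then Proposition~\ref{ray_and_base_proposition}(6). Your alternative --- a covering argument for (f)$\Rightarrow$(d) and a direct sandwich for (d)$\Rightarrow$(a) --- is correct and more elementary, avoiding the uniform-space machinery of Lemma~\ref{phi_zero_general}(2); the paper's route, in turn, shows how that lemma earns its keep.

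There is, however, a genuine gap in your (e)$\Rightarrow$(f). You write that ``$(f^{-})^{-1}[0,\delta)=f^{-1}(-\delta,\infty)$ supplies the required right ray'', but base measurability of $f^{-}$ at $0$ only guarantees \emph{some} neighbourhood base $\calB_0$ with $(f^{-})^{-1}(B)\in\overline{\calF}$ for $B\in\calB_0$; these $B$ need not be intervals, so $(f^{-})^{-1}(B)=f^{-1}\bigl(((-B)\cap(-\infty,0])\cup(0,\infty)\bigr)$ need not be a ray. The fix is to bypass (f) and prove (e)$\Rightarrow$(d) directly with your own covering device: given $0<b<a$, pick a base element $C\ni 0$ for $f^{+}$ with $C\subset(-\eta,\eta)$, determine $\eta''>0$ with $[0,\eta'')\subseteq C$, cover the compact set $[\eta''/2,\,z]$ (any $z\in(b,a)$) by finitely many base elements $B_i\subset(0,z+\eta)$ of $f^{+}$ with $z+\eta<a$, and set $S:=(C\cap[0,\infty))\cup(-\infty,0)\cup\bigcup_i B_i$; then $(-\infty,z]\subseteq S\subseteq(-\infty,a)$, $f^{-1}(S)=(f^{+})^{-1}(C)\cup\bigcup_i(f^{+})^{-1}(B_i)\in\overline{\calF}$, and $H:=f^{-1}(S^{c})$ satisfies $f^{-1}(a,\infty)\subseteq H\subseteq f^{-1}(b,\infty)$. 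This gives Greco-measurability of $f^{+}$, and symmetry handles $f^{-}$. With (e)$\Rightarrow$(d) in place, your (d)$\Rightarrow$(a) and the trivial (a)$\Rightarrow$(f) close the cycle.
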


\begin{proof}
The implications 1a $\implies$ 1b $\implies$ 1d $\implies$ 1e follow from Proposition~\ref{ray_and_base_proposition}(1, 2, and 3) and the fact that $f$ is $\overline{\calF}$-measurable only if $f^+$ and $f^-$ are $\overline{\calF}$-measurable. The implications 1a $\implies$ 1c $\implies$ 1d follow similarly.

(1e $\implies$ 1a) Suppose $f^+$ and $f^-$ are base measurable. Then $f^+$ is uniformly base measurable by Proposition~\ref{ray_and_base_proposition}(4). Define $Y := [0, \infty)$ with the subspace uniformity and let $g : Y \rightarrow \R$ denote the inclusion map. Note $g$ is uniformly continuous and $g^{-1}(-\infty, z) = [0,z)$ is totally bounded for any $z \in \R$. It follows that $(f^+)^{-1}(-\infty, z) \in \overline{\calF}$ and $(f^+)^{-1}(z, \infty) \in \overline{\calF}$ for any $z \in \R$ with $\phi_{f^+}(z) = 0$, by the second condition of Lemma~\ref{phi_zero_general}. But $\phi_{f^+}^{-1}(0)$ is dense in $\R$ because $\phi_{f}^{-1}(0)$ is dense in $\R$ by Lemma~\ref{phi_corollary} below, and $\phi_{f}^{-1}(0) \setminus \{ 0 \} \subseteq \phi_{f^+}^{-1}(0)$. Hence $f^+$ is ray measurable, and a similar argument gives $f^-$ is ray measurable. Thus $f$ is ray measurable by Proposition~\ref{ray_and_base_proposition}(5).

(1a $\iff$ 1f) Suppose $f$ is ray measurable. Then $f$ is base measurable by Proposition~\ref{ray_and_base_proposition}(1, 2, and 3), and the extra condition of 1f holds trivially. Conversely, suppose 1f holds. Then $(f - y)^+$ and $(f - y)^-$ are base measurable, noting $((f - y)^+)^{-1}(B) = f^{-1}(-\infty, y] \cup f^{-1}(y+B)$ and $((f - y)^-)^{-1}(B) = f^{-1}(y, \infty) \cup f^{-1}(y-B)$, for any neighbourhood $B$ of $0$. Hence $f - y$ is ray measurable, since 1e $\implies$ 1a. But then $f$ is ray measurable by Proposition~\ref{ray_and_base_proposition}(6).

(2a $\implies$ 2b $\implies$ 2c) If $f$ is regularly $T_1$-measurable, then it is $T_1$-measurable with respect to the charge space $(X, \overline{\calF}, \overline{\mu})$, hence also $T_1$-measurable with respect to $(X, \calF, \mu)$ by Proposition~1.8(c) of~\cite{basile2000}. The second implication follows by Theorem~4.4.7 of~\cite{bhaskararao1983}.

(2c $\implies$ 2a) By Lemma~\ref{phi_corollary} below, the set $\phi_f^{-1}(0)$ is comeagre in $\mathbb{R}$. But then the set 
\[
\{ \delta \in (0, \infty) : \phi_f(k 2^{-i} \delta) > 0 \mbox{ for some } i \in \N, k \in \mathbb{Z} \}
\]
is meagre in $(0, \infty)$, because it is a countable union of meagre sets. Since $(0, \infty)$ is not meagre in itself, there is some $\delta \in (0, \infty)$ such that $\phi_f(k 2^{-i} \delta) = 0$ for all $i \in \N$ and $k \in \mathbb{Z}$. Now, if $f$ is $T_2$-measurable, then $f^{-1}(-\infty, k 2^{-i} \delta) \in \overline{\calF}$ and $f^{-1}(k 2^{-i} \delta, \infty) \in \overline{\calF}$ for all $i \in \N$ and $k \in \mathbb{Z}$, by the first condition of Lemma~\ref{phi_zero_general} with $g$ the identity function on $Y = \R$. One can therefore define a sequence $\{ s_i \}_{i=1}^{\infty}$ of $\overline{\calF}$-simple functions as in Definition~\ref{ray_measurable}(4). Since $f$ is smooth, then for any $\epsilon, M > 0$ there exists some $k \in \N$ such that $\mu^*(\{ x \in X : \lvert f(x) \rvert > k  \}) < \epsilon$ and $2^{-k} < M$. But then for $i \geq k$, 
\[
\mu^*(\{ x \in X : \lvert s_i(x) - f(x) \rvert > M \}) \leq \mu^*(\{ x \in X : \lvert f(x) \rvert > k  \}) < \epsilon.
\]
Thus $s_i$ converges hazily to $f$, implying $f$ is regularly $T_1$-measurable.

(2c $\implies$ 2d) If $f$ is $T_2$-measurable, then $f$ is smooth. Moreover, $f^{-1}(-\infty, z) \in \overline{\calF}$ and $f^{-1}(z, \infty) \in \overline{\calF}$ for any $z \in \R$ with $\phi_{f}(z) = 0$, by the first condition of Lemma~\ref{phi_zero_general} with $g$ the identity function on $Y = \R$. Hence $f$ is ray measurable, since $\phi_{f}^{-1}(0)$ is dense in $\R$ by Lemma~\ref{phi_corollary}.

(2d $\implies$ 2e $\implies$ 2c) The first implication holds by Proposition~\ref{ray_and_base_proposition}(1, 2, and 3). The second implication holds by Theorem~\ref{uniform_equivalences}(4 and 5).
\end{proof}

The requirement that $\phi_f^{-1}[0, \infty)$ be dense in $\mathbb{R}$ could be replaced by any of the equivalent conditions mentioned in Lemma~\ref{phi_corollary} below. Note this requirement is only invoked in the proofs of 1e $\implies$ 1a (which is itself invoked in the proof of 1f $\implies$ 1a), 2c $\implies$ 2a and 2c $\implies$ 2d.
This requirement is a special case of the condition of Theorem~\ref{T2_relationship_generalised}(1) - that $\eta$ is a topological subbase. To see this, suppose that in Definition~\ref{eta_and_zeta}, $Y = \mathbb{R}$ and $\calS$ contains only the identity function $\iota(y) = y$. Then $\eta = \{ (-\infty, z), (z, \infty) : z \in \phi_f^{-1}[0, \infty) \}$, which is a sub-base for the usual topology on $\mathbb{R}$ if $\phi_f^{-1}[0, \infty)$ is dense in $\mathbb{R}$. Consequently, the implication 2c $\implies$ 2e alternatively follows from Theorem~\ref{T2_relationship_generalised}(1). 

The claim that the set defined in the proof of 2c $\implies$ 2a above is meagre because it is a countable union of meagre sets can be made here without invoking the axiom of countable choice, because each of the meagre sets in the union can be expressed as a countable union of nowhere dense sets with a canonical ordering (see proof of 2~$\implies$~3 in Lemma~\ref{phi_corollary} below).

As noted in the introduction, a smooth, $\overline{\calF}$-measurable function on a charge space must be $T_2$-measurable, a consequence of Theorem~3 of~\cite{greco1981}. Theorem~\ref{real_valued_measurability} above implies the converse holds if $\phi_f^{-1}[0, \infty)$ is dense in $\mathbb{R}$.



The following lemma, used in the proof of Theorem~\ref{real_valued_measurability}, may be regarded as a corollary to Lemma~\ref{phi_properties}, highlighting how points with zero, finite, and infinite boundary mass limit are distributed in $\R$. 

\begin{lemma} \label{phi_corollary}
Let $I$ be any open interval (bounded or unbounded) in $\mathbb{R}$. The following statements are logically equivalent.
\begin{enumerate}
\item $\phi_f^{-1}(\infty) \cap I$ contains no open intervals.
\item $\phi_f^{-1}[a, \infty] \cap I$ is nowhere dense in $I$ for all $a \in (0, \infty]$.
\item $\phi_f^{-1}(0) \cap I$ is comeagre in $I$.
\item $\phi_f^{-1}(0) \cap I$ is dense in $I$.
\item $\phi_f^{-1}[0, \infty) \cap I$ is dense in $I$.
\end{enumerate}
\end{lemma}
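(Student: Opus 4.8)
The plan is to establish the cycle of implications $1 \implies 2 \implies 3 \implies 4 \implies 5 \implies 1$, leaning heavily on Lemma~\ref{phi_properties}. The key structural fact is that, by Lemma~\ref{phi_properties}(1) and (2), the set $\phi_f^{-1}[0,\infty)$ is open in $\R$, so $\phi_f^{-1}(\infty)$ is closed; and by Lemma~\ref{phi_properties}(3), $\phi_f^{-1}(0,\infty)$ is countable, hence $\phi_f^{-1}(0,\infty) \cap I$ is a countable union of singletons, each nowhere dense in $I$ (no singleton in $\R$ contains an interval), so it is meagre in $I$. The bulk of the argument concerns how the closed set $\phi_f^{-1}(\infty)$ sits inside $I$.

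First I would prove $1 \implies 2$. Fix $a \in (0,\infty]$. If $a = \infty$, then $\phi_f^{-1}[\infty,\infty] \cap I = \phi_f^{-1}(\infty) \cap I$ is closed in $I$ and contains no open interval by hypothesis~1, hence is nowhere dense in $I$. If $a \in (0,\infty)$, write $\phi_f^{-1}[a,\infty] \cap I = \big(\phi_f^{-1}[a,\infty) \cap I\big) \cup \big(\phi_f^{-1}(\infty) \cap I\big)$; the second piece is nowhere dense as just noted, and the first piece is a subset of the countable set $\phi_f^{-1}(0,\infty)$, hence meagre in $I$ — but I actually need nowhere dense, so instead I argue directly: by Lemma~\ref{phi_properties}(2), every point $z$ of $\phi_f^{-1}(0,\infty)$ has a neighbourhood $(z-\delta,z+\delta)$ containing no other point $w$ with $\phi_f(w) \geq \phi_f(z)$, in particular none with $\phi_f(w) \geq a$ when $\phi_f(z) \geq a$; combined with the fact that interior points of $\phi_f^{-1}[a,\infty]$ would force an interval of points all with value $\geq a > 0$, contradicting countability of $\phi_f^{-1}(0,\infty)$ unless those points lie in $\phi_f^{-1}(\infty)$, which hypothesis~1 forbids. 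So $\phi_f^{-1}[a,\infty] \cap I$ has empty interior; being closed in $I$ (as $\phi_f^{-1}[a,\infty]$ is closed when $\phi_f^{-1}[0,a)$ is open, which follows from parts 1 and 2 of Lemma~\ref{phi_properties} applied at each point with value $<a$), it is nowhere dense.

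Next, $2 \implies 3$: apply~2 with $a = 1/n$ for each $n \in \N$, together with $a = \infty$; then $\phi_f^{-1}(0,\infty] \cap I = \bigcup_{n} \big(\phi_f^{-1}[1/n,\infty] \cap I\big)$ is a countable union of nowhere dense sets, hence meagre in $I$, so its complement $\phi_f^{-1}(0) \cap I$ is comeagre in $I$. (I would remark, as the paper's surrounding text flags, that this union has a canonical indexing by $n \in \N$, so no choice axiom is needed; but I should be careful that "nowhere dense" for each term is established without choice, which it is, since closures in $\R$ are unambiguous.) Then $3 \implies 4$ is immediate since a comeagre subset of the nonempty open interval $I$ (a Baire space) is dense. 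The implication $4 \implies 5$ is trivial, since $\phi_f^{-1}(0) \subseteq \phi_f^{-1}[0,\infty)$. Finally $5 \implies 1$: if $\phi_f^{-1}(\infty) \cap I$ contained an open interval $J$, then $J$ would be disjoint from $\phi_f^{-1}[0,\infty)$, contradicting the density of $\phi_f^{-1}[0,\infty) \cap I$ in $I$.

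The main obstacle I anticipate is the $1 \implies 2$ step, specifically verifying that $\phi_f^{-1}[a,\infty] \cap I$ is closed in $I$ and has empty interior when $a$ is finite — one must combine the openness of $\phi_f^{-1}[0,a)$ (from Lemma~\ref{phi_properties}(1) and (2)) for closedness, and rule out interior points using both hypothesis~1 (to exclude intervals buried in $\phi_f^{-1}(\infty)$) and the countability of $\phi_f^{-1}(0,\infty)$ from Lemma~\ref{phi_properties}(3) (to exclude intervals of finite-but-positive values). Everything else is routine Baire-category bookkeeping.
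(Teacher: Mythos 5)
Your cycle $1 \Rightarrow 2 \Rightarrow 3 \Rightarrow 4 \Rightarrow 5 \Rightarrow 1$ is the same as the paper's, and the steps $2 \Rightarrow 3$, $3 \Rightarrow 4$, $4 \Rightarrow 5$ and $5 \Rightarrow 1$ are correct and essentially identical to the published proof. The problem is in $1 \Rightarrow 2$ for finite $a$, the step you yourself flag as the main obstacle. Your strategy (show $\phi_f^{-1}[a,\infty]\cap I$ is relatively closed with empty interior) is legitimate, and the closedness part is fine: openness of $\phi_f^{-1}[0,a)$ does follow from Lemma~\ref{phi_properties}(1) and the single-point consequence of Lemma~\ref{phi_properties}(2). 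But your empty-interior argument does not close. First, the inference ``no $w$ near $z$ with $\phi_f(w)\geq\phi_f(z)$, in particular none with $\phi_f(w)\geq a$ when $\phi_f(z)\geq a$'' is backwards: from $\phi_f(w)<\phi_f(z)$ and $\phi_f(z)\geq a$ you cannot conclude $\phi_f(w)<a$. More seriously, your concluding dichotomy --- an open interval $J\subseteq\phi_f^{-1}[a,\infty]$ either ``contradicts countability of $\phi_f^{-1}(0,\infty)$'' or ``lies in $\phi_f^{-1}(\infty)$, which hypothesis~1 forbids'' --- is a false dichotomy as stated. A priori $J$ could consist of countably many finite-valued points interleaved with a cocountable set of infinite-valued points; this is consistent with Lemma~\ref{phi_properties}(3) and escapes both horns, since a cocountable subset of $J$ need not contain any open interval, so hypothesis~1 is not violated.

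The gap is easily repaired with tools you already cite. If $J\subseteq\phi_f^{-1}[a,\infty]$ is a nonempty open interval, take any $z\in J$ with $\phi_f(z)<\infty$ (if there is none, then $J\subseteq\phi_f^{-1}(\infty)$ and hypothesis~1 is violated directly). Lemma~\ref{phi_properties}(2) gives $\delta>0$, which you may shrink so that $(z-\delta,z+\delta)\subseteq J$; then every $w\in(z-\delta,z+\delta)\setminus\{z\}$ satisfies $a\leq\phi_f(w)<\phi_f(z)<\infty$, so an uncountable punctured interval sits inside the countable set $\phi_f^{-1}(0,\infty)$ --- a contradiction. Hence $J$ contains no finite-valued points, $J\subseteq\phi_f^{-1}(\infty)$, and hypothesis~1 finishes the job. (The paper instead uses the full summation form of Lemma~\ref{phi_properties}(2): $n$ distinct points near $z$ with values $\geq a$ force $\phi_f(z)>na$ for all $n$. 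That version works directly with ``$\phi_f^{-1}[a,\infty]$ dense in some subinterval $J$'' rather than ``$J$ contained in it'', which is why the paper never needs your closedness step.)
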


\begin{proof}
1~$\implies$~2 because if there exists $a \in (0, \infty]$ such that $\phi_f^{-1}[a, \infty] \cap I$ is dense in some open subinterval $J$ of $I$, Lemma~\ref{phi_properties}(1 and 2) imply $J$ contains no element of $\phi_f^{-1}[0, \infty)$, hence $J \subseteq \phi_f^{-1}(\infty) \cap I$. 2~$\implies$~3 because $\phi_f^{-1}(0, \infty] \cap I = \cup_{n \in \N} \phi_f^{-1}[1/n, \infty] \cap I$. 3~$\implies$~4 because $I$ is a Baire space, hence a comeagre set is dense. 4~$\implies$~5 because $\phi_f^{-1}(0) \subseteq \phi_f^{-1}[0, \infty)$. 5~$\implies$~1 because a dense subset of an interval intersects every open subinterval.
\end{proof}

\section{Inheritance of measurability}

This final section identifies conditions under which measurability of a function $f : X \rightarrow Y$ is ``inherited'' by a coarser or finer uniformity. The three main results are stated in the following theorem.

\begin{theorem} \label{PJ_transfer}
Consider a charge space $(X, \calF, \mu)$ and a function $f : X \rightarrow Y$.
\begin{enumerate}
\item If $f$ is $T_2$-measurable or smooth with respect to a uniformity $\calU_1$ on $Y$, then $f$ is respectively $T_2$-measurable or smooth with respect to any coarser uniformity $\calU_2 \subseteq \calU_1$.

\item If $f$ is base measurable wrt each topology on $Y$ in a collection $\calC$, then $f$ is base measurable wrt $\bigvee \calC$.

\item If $f$ is uniformly base measurable wrt each uniformity on $Y$ in a collection $\calC$, then $f$ is uniformly base measurable wrt $\bigvee \calC$.
\end{enumerate}
\end{theorem}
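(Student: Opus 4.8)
The plan is to handle the three parts in order, with Parts 1 and 2 being quick and Part 3 the substantive one. For Part 1, I would work directly from Definition~\ref{measurable_uniform}. If $\calU_2 \subseteq \calU_1$ and $f$ is $T_2$-measurable with respect to $\calU_1$, then given $\epsilon > 0$ and an entourage $E \in \calU_2$, we also have $E \in \calU_1$, so the partition $A_0, \dots, A_n$ supplied by $\calU_1$-$T_2$-measurability already satisfies $f(A_i) \times f(A_i) \subseteq E$ with $\mu(A_0) < \epsilon$; this is exactly what $\calU_2$-$T_2$-measurability requires. The smoothness case is identical: an entourage of $\calU_2$ is an entourage of $\calU_1$, and the finite cover $B_1, \dots, B_n$ furnished by $\calU_1$-smoothness works verbatim.

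For Part 2, I would use the observation recorded just after Definition~\ref{measurable_uniform}: if $\rho$ is a subbase for a topology $\tau$ with $f^{-1}(E) \in \overline{\calF}$ for every $E \in \rho$, then $f$ is base measurable with respect to $\tau$. The join $\bigvee \calC$ is by definition the topology generated by the union $\bigcup \calC$ as a subbase. Given $\tau \in \calC$ and a point $y \in Y$, base measurability with respect to $\tau$ gives a neighbourhood base $\calB_y^{\tau}$ at $y$ with $f^{-1}(B) \in \overline{\calF}$ for all $B \in \calB_y^{\tau}$; in particular every $\tau$-open set is a union of sets whose inverse images lie in $\overline{\calF}$. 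Rather than chasing neighbourhood bases, I would simply let $\rho := \bigcup_{\tau \in \calC} \{\, \text{members of some } \calB_y^{\tau} : y \in Y \,\}$, note that $\rho$ is a subbase for $\bigvee \calC$ (since each $\tau \in \calC$ is already generated by such sets and $\bigvee \calC$ is generated by $\bigcup \calC$), and observe $f^{-1}(E) \in \overline{\calF}$ for every $E \in \rho$; the quoted remark then finishes it. A small point to check: the union defining $\rho$ ranges over all $\tau \in \calC$ and all $y \in Y$, but only countability of $\overline{\calF}$ under the operations it actually supports (finite unions/intersections, and it being a $\sigma$-field only in special cases) is needed — and here we need nothing beyond the subbase remark, which requires no countability at all.

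For Part 3, I would run the analogous argument with the second quoted remark: if $\calR$ is an entourage subbase for a uniformity $\calU$ with $E[y] \in \overline{\calF}$ — here I interpret this as $f^{-1}(E[y]) \in \overline{\calF}$ — for all $E \in \calR$ and $y \in Y$, then $f$ is uniformly base measurable. The join $\bigvee \calC$ of uniformities is the uniformity generated by $\bigcup \calC$ as an entourage subbase. For each $\calU \in \calC$, uniform base measurability gives an entourage base $\calB^{\calU} \subseteq \calU$ with $f^{-1}(E[y]) \in \overline{\calF}$ for all $E \in \calB^{\calU}$, $y \in Y$. Set $\calR := \bigcup_{\calU \in \calC} \calB^{\calU}$. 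Then $\calR$ generates $\bigvee \calC$ as an entourage subbase (each $\calU$ is generated by $\calB^{\calU}$, and $\bigvee \calC$ is generated by $\bigcup_{\calU \in \calC} \calU$), and $f^{-1}(E[y]) \in \overline{\calF}$ for every $E \in \calR$ and $y \in Y$ by construction; apply the remark. The main obstacle — really the only place care is needed — is the bookkeeping that $\calR$ is genuinely an entourage \emph{subbase} for the join: one must confirm that finite intersections of members of $\calR$ form a base for $\bigvee \calC$, which follows because finite intersections of members of $\bigcup_{\calU} \calB^{\calU}$ generate the same filter as finite intersections of members of $\bigcup_{\calU} \calU$ (since $\calB^{\calU}$ is a base for $\calU$). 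I expect no deeper difficulty; all the real content was already isolated in the two post-definition remarks.
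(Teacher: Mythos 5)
Your proposal is correct and matches the paper's treatment: the paper itself declares Claim~1 immediate from the definitions and disposes of Claims~2 and~3 by exactly the observation you use, namely that finite intersections of elements drawn from the given bases for the members of $\calC$ form a base for $\bigvee \calC$, so the two post-definition remarks (with the same typo-correction you make, reading $f^{-1}(E[y]) \in \overline{\calF}$) finish the argument. One small point of care in Part~2: the sets in a neighbourhood base $\calB_y^{\tau}$ need not be open, so your $\rho$ is not literally a subbase in the usual sense; but the argument survives unchanged if you work with neighbourhood bases directly --- a finite intersection $B_1 \cap \cdots \cap B_n$ with $B_j \in \calB_y^{\tau_j}$ is still a $\bigvee\calC$-neighbourhood of $y$, such intersections form a neighbourhood base at $y$ for the join, and $f^{-1}(B_1 \cap \cdots \cap B_n) \in \overline{\calF}$ because $\overline{\calF}$ is a field.
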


In Claim~2 of this theorem, the {\em least upper bound} $\bigvee \calC$ of a collection of topologies $\calC$ is the smallest topology that contains $\bigcup \calC$. Similarly, in Claim~3, the {\em least upper bound} $\bigvee \calC$ of a collection of uniformities $\calC$ is the smallest uniformity that contains $\bigcup \calC$.

Claim~1 is immediate from the definitions of $T_2$-measurability and smoothness. Claim~2 follows from the fact that a base for $\bigvee \calC$ can be obtained by taking finite intersections of elements drawn from a union of bases for each of the topologies in $\calC$. Statement~3 follows in a similar manner. Thus none of the claims requires detailed proof. Nevertheless, the theorem has some useful corollaries.

\begin{corollary} \label{PJ_pseudometrics}
Consider a charge space $(X,\calF,\mu)$, a uniform space $Y$ with uniformity $\calU$ induced by a family of pseudometrics $\calS$, and a function $f : X \rightarrow Y$. 
\begin{enumerate}
\item If $f$ is $T_2$-measurable with respect to $\calU$, then $f$ is $T_2$-measurable with respect to the uniformities induced by each $p \in \calS$.

\item If $f$ is uniformly base measurable wrt the uniformities induced by each $p \in \calS$, then $f$ is uniformly base measurable wrt $\calU$.
\end{enumerate}
\end{corollary}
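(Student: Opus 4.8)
The plan is to derive both claims directly from Theorem~\ref{PJ_transfer}, once one observes that the uniformity $\calU$ induced by the family $\calS$ of pseudometrics is the least upper bound of the uniformities induced by the individual members of $\calS$. For each $p \in \calS$, let $\calU_p$ denote the uniformity on $Y$ induced by the single pseudometric $p$; its entourages are generated by the sets $\{ (y,y') \in Y \times Y : p(y,y') < r \}$ for $r > 0$. By the definition of the uniformity generated by a family of pseudometrics, the collection $\bigcup_{p \in \calS} \{ \{ (y,y') : p(y,y') < r \} : r > 0 \}$ is an entourage subbase for $\calU$; hence $\calU$ is the smallest uniformity containing $\bigcup_{p \in \calS} \calU_p$, that is, $\calU = \bigvee_{p \in \calS} \calU_p$. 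In particular $\calU_p \subseteq \calU$ for every $p \in \calS$.

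For Claim~1, since each $\calU_p$ is coarser than $\calU$ and $f$ is $T_2$-measurable with respect to $\calU$, Theorem~\ref{PJ_transfer}(1) immediately gives that $f$ is $T_2$-measurable with respect to each $\calU_p$. For Claim~2, apply Theorem~\ref{PJ_transfer}(3) to the collection $\calC := \{ \calU_p : p \in \calS \}$: the hypothesis that $f$ is uniformly base measurable with respect to each $\calU_p$ yields uniform base measurability with respect to $\bigvee \calC$, which equals $\calU$ by the identification above.

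The only step that requires any argument is the identification $\calU = \bigvee_{p \in \calS} \calU_p$, which I expect to be the main (indeed essentially the sole) obstacle; it is a standard fact about uniformities generated by families of pseudometrics, and I would justify it simply by exhibiting the common entourage subbase displayed above. Everything else is a direct appeal to Theorem~\ref{PJ_transfer}, so no further calculation is needed.
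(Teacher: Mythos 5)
Your proposal is correct and follows exactly the route the paper intends: the paper gives no separate proof of this corollary, presenting it as an immediate consequence of Theorem~\ref{PJ_transfer}(1) and~(3) once one notes that $\calU$ is the least upper bound of the uniformities $\calU_p$ induced by the individual pseudometrics. Your justification of that identification via the common entourage subbase is the standard one and closes the only nontrivial step.
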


\begin{corollary} \label{PJ_coarser_uniformity}
Consider a charge space $(X,\calF,\mu)$, a uniform space $(Y, \calU)$ with the weak uniformity $\calU$ induced by a family of functions $\{ h_{\alpha} : \alpha \in \Omega \}$, where $h_{\alpha} : Y \rightarrow Y_{\alpha}$ and $(Y_{\alpha}, \calU_{\alpha})$ is a uniform space for each $\alpha \in \Omega$, and a function $f : X \rightarrow Y$. 

\begin{enumerate}
\item If $f$ is $T_2$-measurable with respect to $\calU$, then $h_{\alpha} \circ f$ is $T_2$-measurable with respect to $\calU_{\alpha}$ for each $\alpha \in \Omega$.

\item If $h_{\alpha} \circ f$ is uniformly base measurable with respect to $\calU_{\alpha}$ for each $\alpha \in \Omega$, then $f$ is uniformly base measurable wrt $\calU$.
\end{enumerate}
\end{corollary}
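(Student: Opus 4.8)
The plan is to reduce both claims to Theorem~\ref{PJ_transfer} by exhibiting the weak uniformity $\calU$ as a supremum of single-function ``pulled-back'' uniformities. For each $\alpha \in \Omega$ I would let $\calV_\alpha$ be the initial uniformity on $Y$ rendering $h_\alpha : Y \to Y_\alpha$ uniformly continuous: concretely, $\calV_\alpha$ consists of all subsets of $Y \times Y$ that contain some set $(h_\alpha \times h_\alpha)^{-1}(E) := \{ (y,y') \in Y \times Y : (h_\alpha(y), h_\alpha(y')) \in E \}$ with $E \in \calU_\alpha$. Since a uniformity on $Y$ makes every $h_\alpha$ uniformly continuous exactly when it contains every $\calV_\alpha$, the weak uniformity generated by $\{ h_\alpha : \alpha \in \Omega \}$ is $\calU = \bigvee_{\alpha \in \Omega} \calV_\alpha$, and I would record this at the outset.

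For Claim~1 I would argue directly. Fix $\alpha$, an entourage $E \in \calU_\alpha$, and $\epsilon > 0$. Then $D := (h_\alpha \times h_\alpha)^{-1}(E)$ lies in $\calU$ (since $h_\alpha$ is uniformly continuous for $\calU$), so $T_2$-measurability of $f$ with respect to $\calU$ supplies a partition of $X$ into $A_0, A_1, \ldots, A_n \in \calF$ with $\mu(A_0) < \epsilon$ and $f(A_i) \times f(A_i) \subseteq D$ for each $i \in \{ 1, \ldots, n \}$. Applying $h_\alpha \times h_\alpha$ turns this into $(h_\alpha \circ f)(A_i) \times (h_\alpha \circ f)(A_i) \subseteq E$, which is precisely the partition demanded by the definition of $T_2$-measurability of $h_\alpha \circ f$ with respect to $\calU_\alpha$. (Equivalently, $\calV_\alpha$ is coarser than $\calU$, so Theorem~\ref{PJ_transfer}(1) gives $T_2$-measurability of $f$ with respect to $\calV_\alpha$, and this is a verbatim restatement of $T_2$-measurability of $h_\alpha \circ f$ with respect to $\calU_\alpha$ once one checks entourages on the base $\{ (h_\alpha \times h_\alpha)^{-1}(E) : E \in \calU_\alpha \}$.)

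For Claim~2 I would first show that $f$ is uniformly base measurable with respect to each $\calV_\alpha$, and then invoke Theorem~\ref{PJ_transfer}(3). By hypothesis, for each $\alpha$ there is an entourage base $\calB_\alpha \subseteq \calU_\alpha$ with $(h_\alpha \circ f)^{-1}(E[z]) \in \overline{\calF}$ for all $E \in \calB_\alpha$ and $z \in Y_\alpha$. The family $\{ (h_\alpha \times h_\alpha)^{-1}(E) : E \in \calB_\alpha \}$ is then an entourage base for $\calV_\alpha$, because $(h_\alpha \times h_\alpha)^{-1}(\cdot)$ is monotone and commutes with finite intersections and $\calB_\alpha$ is a base for $\calU_\alpha$. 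For $D = (h_\alpha \times h_\alpha)^{-1}(E)$ in this base and any $y \in Y$, the elementary identity $D[y] = h_\alpha^{-1}(E[h_\alpha(y)])$ yields $f^{-1}(D[y]) = (h_\alpha \circ f)^{-1}(E[h_\alpha(y)]) \in \overline{\calF}$. Hence $f$ is uniformly base measurable with respect to $\calV_\alpha$ for every $\alpha$, and Theorem~\ref{PJ_transfer}(3), applied to the collection $\{ \calV_\alpha : \alpha \in \Omega \}$, delivers uniform base measurability with respect to $\bigvee_\alpha \calV_\alpha = \calU$. (Alternatively, $\bigcup_\alpha \{ (h_\alpha \times h_\alpha)^{-1}(E) : E \in \calB_\alpha \}$ is an entourage subbase for $\calU$ whose sections all pull back through $f$ into $\overline{\calF}$, so the subbase remark following Definition~\ref{measurable_uniform} applies at once.)

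I do not anticipate a genuine obstacle here: the corollary is essentially bookkeeping layered on Theorem~\ref{PJ_transfer}. The only points needing attention are the two standard facts about initial uniformities used above — that a base of $\calU_\alpha$ pulls back to a base of $\calV_\alpha$, and that $\calU = \bigvee_\alpha \calV_\alpha$ — together with the set identity $D[y] = h_\alpha^{-1}(E[h_\alpha(y)])$ converting a section of a pulled-back entourage into a section pulled back through $h_\alpha$. If preferred, one could sidestep the auxiliary uniformities $\calV_\alpha$ entirely and phrase Claim~2 via the entourage-subbase remark directly, which shortens the argument at the cost of a slightly more explicit subbase computation.
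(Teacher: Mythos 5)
Your proposal is correct and follows exactly the route the paper intends: the paper states this corollary without proof as a consequence of Theorem~\ref{PJ_transfer}, and your decomposition of the weak uniformity as $\bigvee_\alpha \calV_\alpha$, together with the identity $D[y] = h_\alpha^{-1}(E[h_\alpha(y)])$ for pulled-back entourages, supplies precisely the bookkeeping that reduces Claim~2 to Theorem~\ref{PJ_transfer}(3) and makes Claim~1 a direct check. No gaps.
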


In particular, Corollary~\ref{PJ_coarser_uniformity} applies when $Y := \prod_{\alpha \in \Omega} Y_{\alpha}$ with the product uniformity, and $h_{\alpha}$ is the coordinate projection of $Y$ onto $Y_{\alpha}$ for each $\alpha \in \Omega$.

\end{document}